\documentclass[reqno]{amsart}

\usepackage{amsmath,amssymb,amsthm}
\usepackage{esint}

\usepackage{tikz}

\usepackage{caption}
\usepackage{graphicx}
\usepackage{graphics}

\parindent 0pt

\newtheorem{theorem}{Theorem}

\newtheorem{lemma}{Lemma}

\newtheorem*{proposition}{Proposition}

\begin{document}

\title[]{Random Growth via Gradient Flow Aggregation}

\author[]{Stefan Steinerberger}
\address{Department of Mathematics, University of Washington, Seattle, WA 98195, USA} \email{steinerb@uw.edu}


\begin{abstract} We introduce Gradient Flow Aggregation (GFA), a random growth model. Given a set of existing particles $\left\{x_1, \dots, x_n\right\} \subset \mathbb{R}^2$, a new particle arrives from a random direction at $\infty$ and flows in direction $\nabla E$ where 
$$ E(x) = \sum_{i=1}^{n} \frac{1}{\|x-x_i\|^{\alpha}} \qquad \mbox{where} ~0 < \alpha < \infty.$$
The case $\alpha = 0$ will refer to the logarithmic energy $- \sum\log \|x-x_i\|$. Particles stop once they are at distance 1 of one of the existing particles at which point they are added to the set and remain fixed for all time. We prove, under a non-degeneracy assumption, a Beurling-type estimate which, via Kesten's method, can be used to deduce sub-ballistic growth for $0 \leq \alpha < 1$
$$\mbox{diam}(\left\{x_1, \dots, x_n\right\}) \leq c_{\alpha} \cdot n^{\frac{3 \alpha +1}{2\alpha + 2}}.$$
 This is optimal when $\alpha =0$. The case $\alpha = 0$ leads to a `round' full-dimensional tree. The larger the value of $\alpha$ the sparser the tree. Some instances of the higher-dimensional setting are also discussed.\end{abstract}

\maketitle

\section{Introduction and Results}

\subsection{Aggregation models}
The purpose of this paper is to introduce a model describing random growth of particles aggregating in the plane. We will almost exclusively work in $\mathbb{R}^2$ and we will consider particles to be disks of radius $1/2$ (which are then often identified with their center).

\begin{center}
\begin{figure}[h!]
\begin{tikzpicture}
\node at (0,0) {\includegraphics[width=0.32\textwidth]{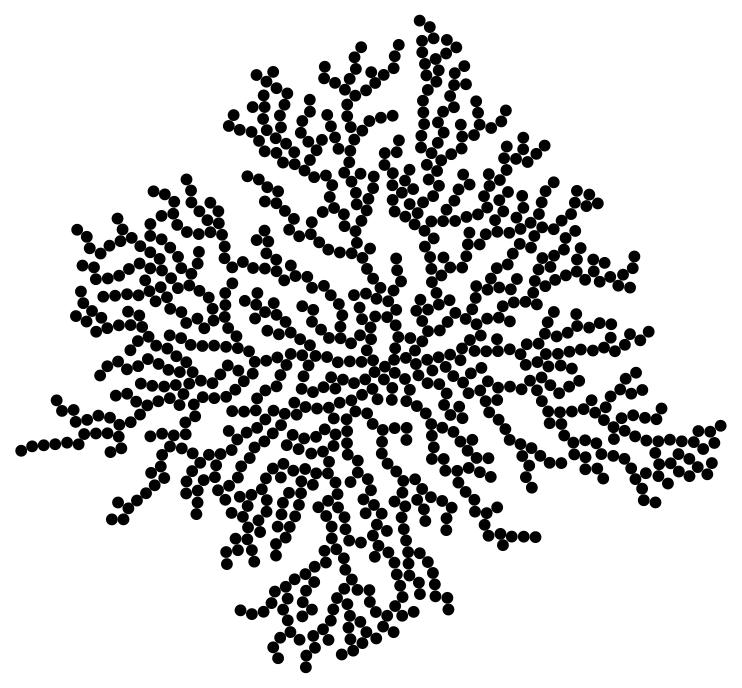}};
\node at (0, -2.2) {$\alpha = 0$};
\node at (4.3,0) {\includegraphics[width=0.3\textwidth]{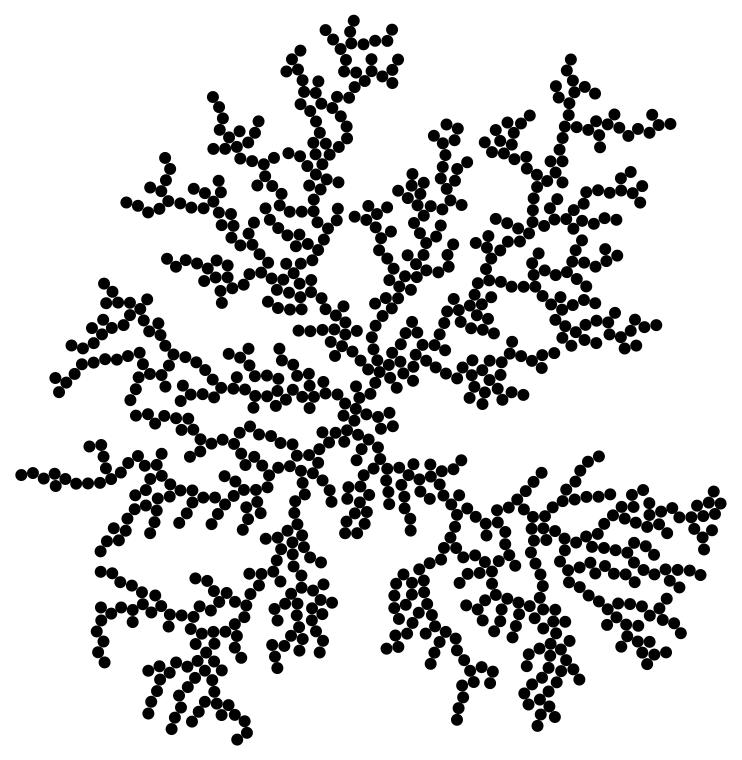}};
\node at (4.3, -2.2) {$\alpha = 1$};
\node at (8.5,0) {\includegraphics[width=0.29\textwidth]{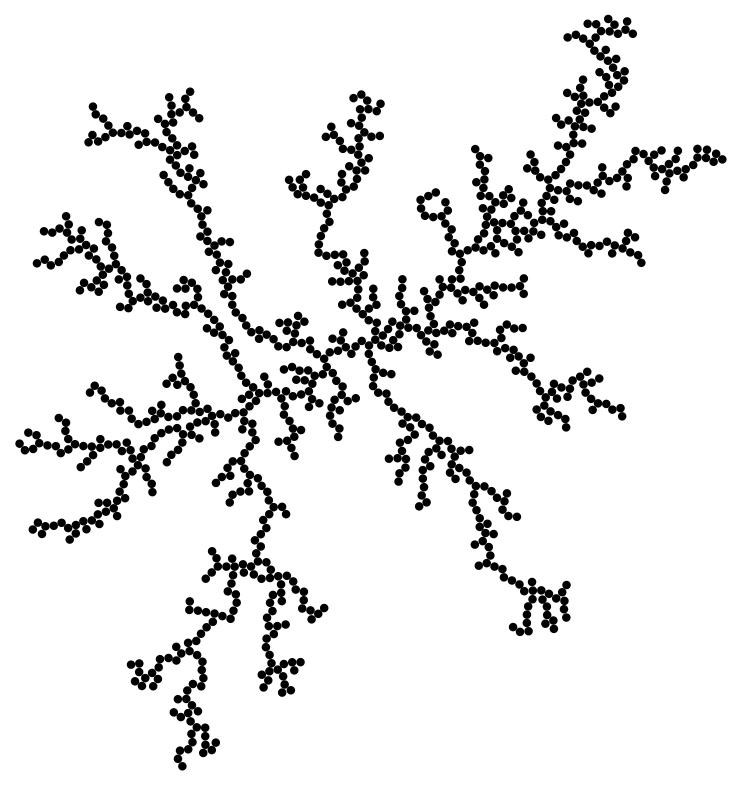}};
\node at (8.5, -2.2) {$\alpha = 2$};
\end{tikzpicture}
\caption{Simulation of Gradient Flow Aggregation (GFA) for $n=1000$ particles with various parameters of $\alpha$.}
\label{fig:1}
\end{figure}
\end{center}

Perhaps the most celebrated random growth model is Diffusion Limited Aggregation (DLA) introduced by Witten \& Sanders \cite{witten} in 1981. DLA models random growth by assuming that a particle coming from `infinity' performs a random walk until it first touches an already existing set of particles at which point it gets stuck for all time. The model is frequently considered on the lattice $\mathbb{Z}^d$ where random walks are particularly easy to define. The main results are due to Kesten \cite{kesten0, kesten1, kesten2} who proved that the diameter of a cluster of $n$ particles under DLA on the lattice $\mathbb{Z}^2$ is $\lesssim n^{2/3}$ (along with generalizations to $\mathbb{Z}^d$). No non-trivial lower bound (better than $\gtrsim n^{1/2}$) is known.
Many other models have been proposed, we specifically mention the much Eden model \cite{eden},  the Vold--Sutherland model \cite{suh, vold}, the Dielectric Breakdown Model (DBM) \cite{losev, nie} and the Hastings-Levitov model \cite{hastings}.
They have received a lot of attention because of the intricate emerging complexity and the simplicity of the setup: nonetheless, there are relatively few rigorous results. In particular, DLA remains
 `notoriously resilient to rigourous analysis' (Benjamini \& Yadin \cite{ben}). We were motivated by the question of whether the underlying probability theory (a random Brownian walker coming from infinity) could be replaced by more elementary ingredients and whether this would lead to a model where the corresponding theory would also simplify (and thus, hopefully, allow for further insight). This motivated GFA which seems to exhibit emergent complexity similar to that of many of the previous models. Additionally, it admits an analogous theory (a Beurling-style estimate leading to a growth bound via Kesten's method) where all the constituent elements are `elementary'.

\subsection{Gradient Flow Aggregation}
We start with a single particle $x_1 \in \mathbb{R}^2$. If we already have $\left\{x_1, \dots, x_n \right\} \subset \mathbb{R}^2$, then a new particle $x_{n+1}$ is created randomly at `infinity' (uniformly over all directions, made precise below) and flows in direction of $\nabla E$ (gradient \textit{ascent}) where
$$ E(x) =  \sum_{i=1}^{n}\frac{1}{\|x - x_i\|^{\alpha}} \qquad \mbox{where}~0 < \alpha < \infty.$$

\begin{center}
\begin{figure}[h!]
\begin{tikzpicture}
\node at (0,0) {\includegraphics[width=0.35\textwidth]{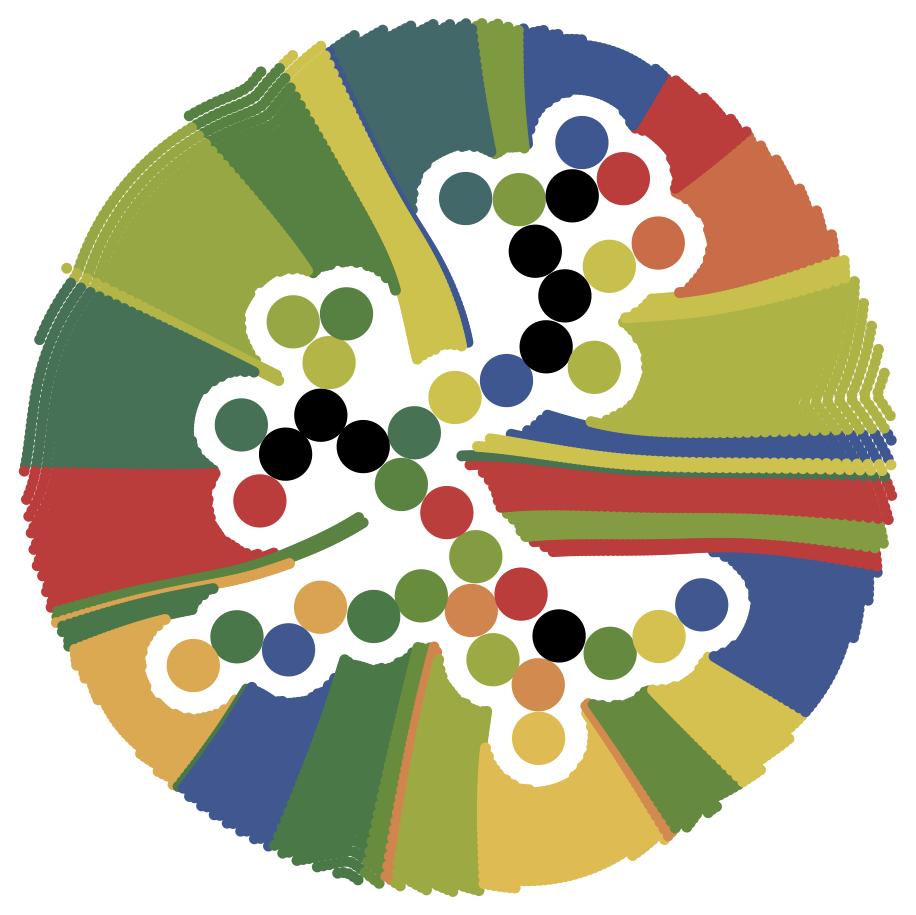}};
\node at (0, -2.5) {$\alpha = 0$};
\node at (6,0) {\includegraphics[width=0.35\textwidth]{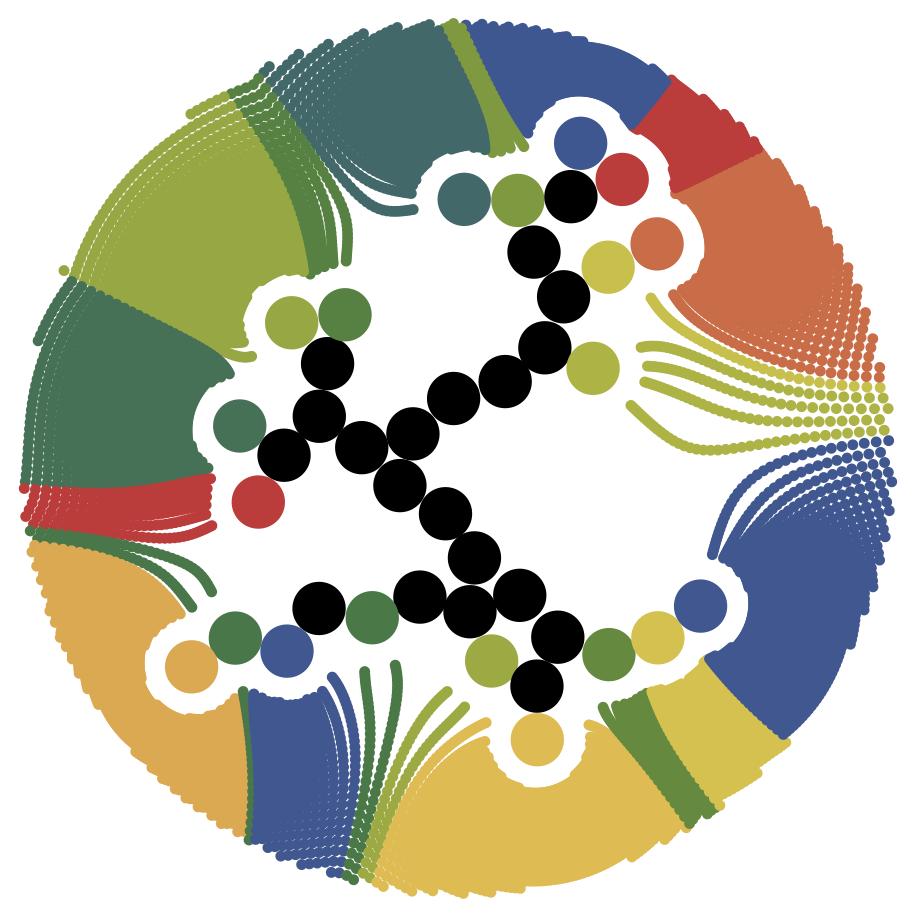}};
\node at (6, -2.5) {$\alpha = 4$};
\end{tikzpicture}
\caption{A fixed cluster of 40 particles. Gradient descent lines are colored by the color of the particle they eventually attach to (particles colored black were not hit by any gradient descent in the simulation). Likelihoods are determined by gradient flow lines at infinity. As $\alpha$ increases, the exposed endpoints gain more mass.}
\label{fig:2}
\end{figure}
\end{center}

The particle flows until it reaches distance 1 to one of the existing points for the first time; at that point, the flow stops and the particle then remains at that location for all time. There is a natural interpretation of the model where new particles are drawn to the existing particles via a `field' generated by the existing particles. The only randomness in the model is the starting angle at infinity of the new particle (a single uniformly distributed random variable on $[0,2\pi]$). In particular, the model requires less `random input' (a single random variable) than DLA (where an entire Brownian path is required). There are two obvious modifications of the energy $E$ at the endpoints $\alpha \in \left\{0, \infty\right\}$. We define
\begin{align*}
 E(x) &= \sum_{i=1}^{n} \log\left( \frac{1}{\|x-x_i\|}\right) \qquad  \mbox{when}~\alpha = 0 \\
 E(x) &= \max_{1 \leq i \leq n} \frac{1}{\|x-x_i\|} \quad \qquad  \quad \mbox{when} ~\alpha =\infty.
 \end{align*}
 It remains to consider how to proceed with critical points $\nabla E(x^*) = 0$ which may have the effect of trapping a gradient flow. In practice, this is not actually an issue. We work, for the remainder of the paper, under a non-degeneracy assumption.
 \begin{quote}
\textbf{Definition.} We say $\left\{x_1, \dots, x_n\right\} \subset \mathbb{R}^2$ satisfies property $(P)$ if $E(x) = \sum_{i=1}^{n} \|x-x_i\|^{-\alpha}$ has a finite number of critical points.
\end{quote}
A result of Gauss implies that when $\alpha = 0$, then every set has property $(P)$. It is widely assumed to be true in general (a version of this question is known as `Maxwell's problem'). Since it stands to reason that it is either true for all points or, at very least, typically true (in the sense of a hypothetical exceptional set being rare), it is a rather weak assumption; we comment more on this in \S 2.1.

\begin{center}
\begin{figure}[h!]
\begin{tikzpicture}
\node at (0,0) {\includegraphics[width=0.6\textwidth]{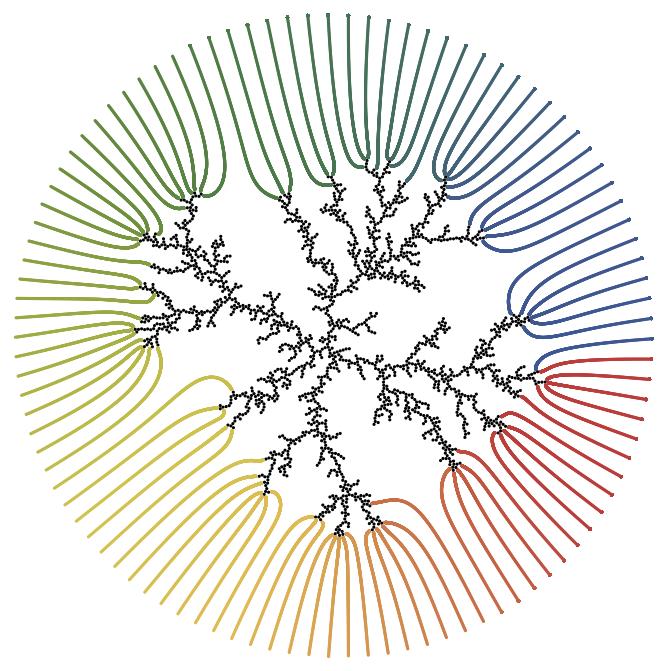}};
\end{tikzpicture}
\caption{A tree grown with $\alpha =2$ and the trajectory of 100 incoming gradient flows (equispaced in angle). Much like in other growth models, there is a tendency to avoid existing valleys.}
\label{fig:3}
\end{figure}
\end{center}

 It remains to give a precise definition of what it means for a new particle to appear `randomly at infinity'. For any $r \gg 1$, we can consider points at distance $\|x\| = r$ from the origin (with uniform probability)
and ask for the likelihood $p_i(r)$ that gradient flow started in the random point gets ends up being attached to $x_i$ (meaning that the gradient ascent flow reaches distance 1 to $x_i$ and this is the first time the gradient flow is distance 1 to any of the points in the existing set). The limit as $r \rightarrow \infty$ exists, the simplest statement one can make in this direction is as follows.

\begin{theorem}
Assuming property (P), the limits $\lim_{r \rightarrow \infty} p_i(r)$ exist and sum to 1.
\end{theorem}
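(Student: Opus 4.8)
The plan is to establish two separate facts: first that each individual limit $\lim_{r\to\infty}p_i(r)$ exists, and second that the total escape probability vanishes so that the surviving limits sum to $1$. The guiding heuristic is that far from the cluster the energy $E$ is, to leading order, radially symmetric: for $\|x\|=r$ large we have $\nabla E(x) \approx -\alpha n \|x\|^{-\alpha-1}\, x/\|x\|$ (respectively $-n\,x/\|x\|^2$ for $\alpha=0$), so the incoming flow lines are nearly radial rays pointing at the origin, with corrections of lower order in $1/r$. The key analytic object is the map $\Phi_r:\partial B(0,r)\to\{1,\dots,n,\ast\}$ sending a starting angle $\theta$ to the label of the particle eventually hit (with $\ast$ denoting escape/non-attachment). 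Then $p_i(r)$ is just the normalized arclength measure of $\Phi_r^{-1}(i)$, and I want to show $\Phi_r$ stabilizes as $r\to\infty$ in a measure-theoretic sense.

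First I would exploit property $(P)$: since $E$ has only finitely many critical points, they lie inside some fixed ball $B(0,R_0)$, and the gradient flow outside $B(0,R_0)$ has no equilibria. Standard ODE theory (existence, uniqueness, continuous dependence on initial conditions for the flow of the smooth, non-vanishing vector field $\nabla E$ on the complement of the particles and the critical set) then gives that, away from the finitely many flow lines that run into a critical point, the attachment label $\Phi_r(\theta)$ depends continuously — hence locally constantly, since the target is discrete — on $\theta$. The separatrices emanating (backward) from the finitely many critical points partition $\partial B(0,r)$ into finitely many arcs on each of which $\Phi_r$ is constant; this is the structural input that makes the picture finite and the measures well-defined.

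Next I would set up the comparison between different radii. Because the radial field dominates at infinity, any flow line started on $\partial B(0,r')$ with $r'>r$ must cross $\partial B(0,r)$ (it is driven monotonically inward in the radial coordinate once $r$ is large, as $\langle \nabla E(x), x\rangle<0$ there), and I would define a transfer map $T_{r',r}:\partial B(0,r')\to \partial B(0,r)$ recording the first-crossing angle. Then $\Phi_{r'} = \Phi_r\circ T_{r',r}$ off the separatrix set, so it suffices to control how $T_{r',r}$ distorts arclength measure. The radial-dominance estimate lets me bound the angular drift accumulated between radius $r'$ and $r$ by something like $\int \|\nabla E_{\mathrm{tang}}\|/\|\nabla E_{\mathrm{rad}}\|\,ds \lesssim \int_r^\infty \rho^{-1}\cdot(\text{const}/\rho)\,d\rho \to 0$, so $T_{r',r}$ converges to the identity uniformly as $r\to\infty$ and its pushforward of normalized arclength converges to normalized arclength. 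This forces $p_i(r)=(\Phi_r)_*(\text{unif})(\{i\})$ to be Cauchy in $r$, giving existence of the limit.

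The main obstacle, and where I expect to spend the real effort, is the second claim — that the escape probability tends to $0$, i.e. that the set $\Phi_r^{-1}(\ast)$ has vanishing measure in the limit, so the surviving $p_i$ sum to exactly $1$. A priori a flow line could spiral, stall near the separatrix structure, or fail to attach; ruling this out requires showing that almost every radial-like incoming trajectory is actually captured by the cluster rather than grazing past it or converging to an interior critical point. I would argue that the set of starting angles whose forward orbit limits onto a critical point is finite (these are precisely the stable separatrices, of which there are finitely many by $(P)$, hence measure zero), and that every other inward-driven trajectory, having $\|x\|$ eventually bounded and no limiting equilibrium available, must reach distance $1$ of some $x_i$ in finite time by a compactness/$\omega$-limit-set argument (Poincaré–Bendixson, using that the only possible $\omega$-limits are critical points or the attachment boundary). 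Establishing that the measure of potentially-escaping angles genuinely vanishes in the $r\to\infty$ limit — rather than merely being small — is the delicate point, and I would lean on the monotonicity of $\|x\|$ under the flow at large radius together with property $(P)$ to close the gap.
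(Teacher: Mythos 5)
The overall architecture of your proposal — partition $\partial B(0,r)$ into finitely many arcs on which the attachment label is constant, then use radial dominance to bound the angular drift by $O(n/r)$ and conclude the Cauchy property — is in essence the paper's own argument (your transfer map $T_{r',r}$ is a clean repackaging of the paper's ``cone'' argument in the proof of Theorem 1 and Lemma 2). But there is a genuine gap at the step where you assert that ``the set of starting angles whose forward orbit limits onto a critical point is finite (these are precisely the stable separatrices, of which there are finitely many by $(P)$).'' Property $(P)$ only bounds the \emph{number} of critical points; it says nothing about their \emph{type}. For gradient \emph{ascent}, a local maximum of $E$ is an attracting equilibrium with an \emph{open} basin, so a single isolated critical point — perfectly compatible with $(P)$ — could capture a positive measure of incoming trajectories. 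In that scenario your structural partition breaks down and, more importantly, the limits $p_i$ would sum to strictly less than $1$. What rules this out is the one structural fact about $E$ your proposal never invokes: $\Delta E(x) = \alpha^2 \sum_{j} \|x-x_j\|^{-\alpha-2} > 0$ for $\alpha>0$ (and $\Delta E = 0$, harmonicity, for $\alpha = 0$), so $E$ has no interior local maxima; every critical point has at least one strictly positive Hessian eigenvalue and hence is either a local minimum (unreachable by ascent) or saddle-like with a one-dimensional stable set consisting of finitely many trajectories. This is exactly the content of \S 2.2 of the paper, and without it your finiteness claim is simply false for a general smooth field with finitely many zeros.

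A second, smaller gap: local constancy of $\Phi_r$ away from separatrices does not follow from continuous dependence on initial conditions alone, because a trajectory can reach the stopping set $\{x : \min_j \|x-x_j\| = 1\}$ \emph{tangentially}; at such a starting angle $\Phi_r$ jumps even though no critical point is involved (trajectories on one side graze past the disk and attach elsewhere). So your partition must also include the backward orbits of tangency points, and you need these to be finite — e.g.\ via real-analyticity of $E$ away from the particles, together with an argument that the tangency function cannot vanish identically on a stopping circle. The paper avoids this issue entirely with its trapping-region argument (Lemma 1), which needs no transversality. Once both points are repaired, your argument closes: $T_{r',r}^{-1}$ of an arc is squeezed between the arc enlarged and shrunk by $Cn/r$, so a uniformly bounded arc count gives the Cauchy property; and the part you flag as the ``delicate point'' is actually immediate — for every \emph{fixed} $r$ the non-attaching set is finite (hence null), so $\sum_i p_i(r) = 1$ exactly, and since $n$ is fixed the limit exchanges with the finite sum.
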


We quickly comment on the role of $\alpha$ (illustrated in Fig. \ref{fig:1}, Fig. \ref{fig:2} and Fig. \ref{fig:4}). Larger values of $\alpha$ lead to a stronger pull by nearby particles. Philosophically, it should be similar to the parameter $\eta$ in the dielectric breakdown model (DBM).  We may think of the newly incoming particle as being pulled by the existing particles with a force being determined by the distance: particles that are closer exert a stronger pull. Larger values of $\alpha$ lead to sparser trees.

\begin{center}
\begin{figure}[h!]
\begin{tikzpicture}
\node at (0,0) {\includegraphics[width=0.5\textwidth]{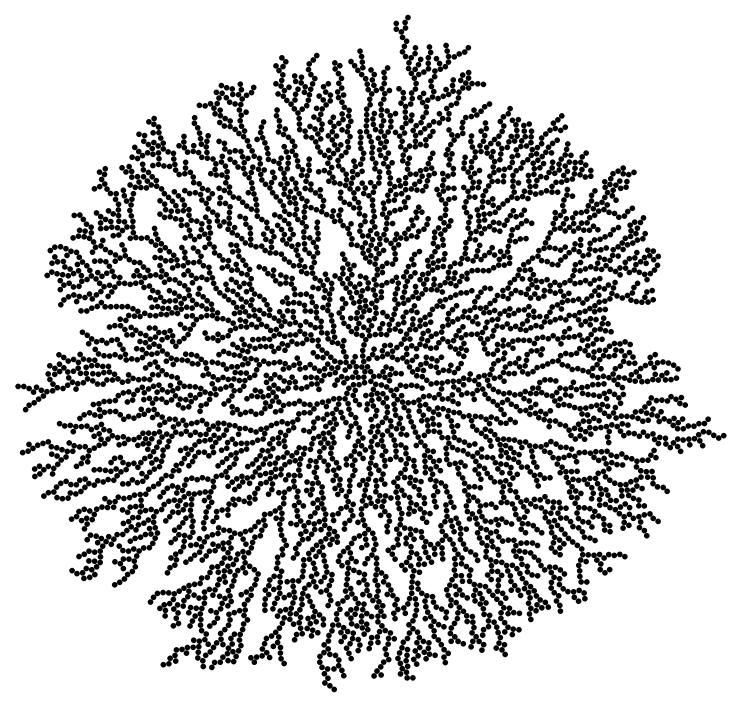}};
\node at (0, -3.3) {$\alpha = 0$};
\node at (6.3,0) {\includegraphics[width=0.5\textwidth]{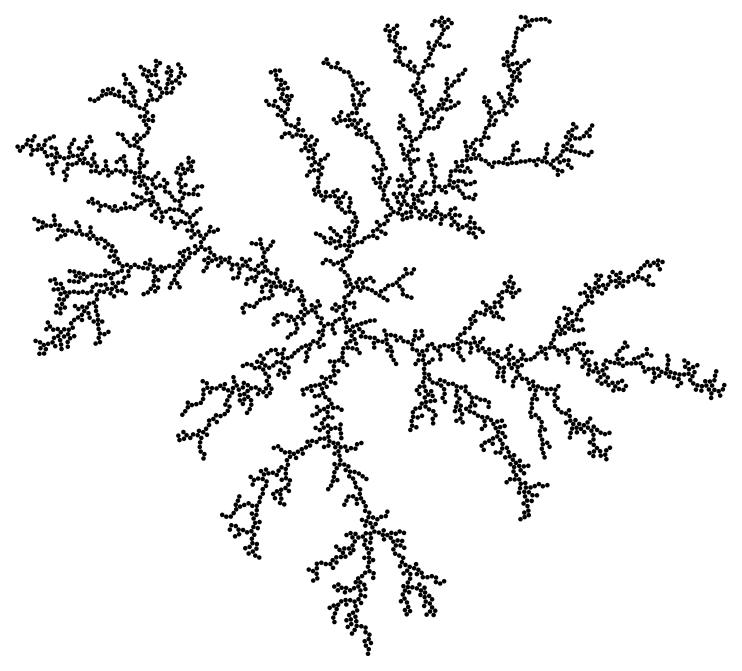}};
\node at (6.3, -3.3) {$\alpha = 2$};
\end{tikzpicture}
\caption{GFA for $n=5000$ particles with $\alpha=0$ (left) and $n=2500$ particles with $\alpha =2$. Theorem 3 implies that the structure on the left grows as slowly as possible, $\mbox{diam}(x_1, \dots, x_n) \leq c \sqrt{n}$.}
\label{fig:4}
\end{figure}
\end{center}

\vspace{-20pt}

\subsection{The Beurling estimate.} The crucial estimate in the theory of DLA is Beurling's estimate for harmonic measure which provides a uniform upper bound for the likelihood of a new incoming particle attaching itself to any fixed particle. We prove an analogous result in the setting of GFA for $0 \leq \alpha \leq 1$.

\begin{theorem}[Beurling estimate] Let $0 \leq \alpha \leq 1$ and suppose $\left\{x_1, \dots, x_n\right\} \subset \mathbb{R}^2$ satisfies property (P). Then, for some $c_{\alpha}>0$ depending only on $\alpha$,
$$ \max_{1 \leq i \leq n} \mathbb{P}\left(\emph{new particles hits}~x_i\right) \leq c_{\alpha}  \cdot n^{\frac{\alpha-1}{2\alpha+2}}.$$
\end{theorem}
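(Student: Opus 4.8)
The plan is to read $p_i$ as the normalized angular measure at infinity of the bundle $T_i\subset[0,2\pi)$ of incoming directions whose gradient trajectory terminates on $x_i$, and to bound this measure by transporting it along the flow down to the unit circle $\{\|x-x_i\|=1\}$, where it becomes a boundary integral over the exposed attachment arc $A_i$ (the part of that circle at distance $\ge 1$ from every $x_j$) that the packing of the points controls. Property (P) guarantees finitely many critical points, so outside a null set of angles every trajectory is a well-defined curve from $\infty$ to some $A_j$, distinct trajectories do not cross, and $E$ increases monotonically along each of them.

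I would first treat $\alpha=0$, which is the model case and should be sharp. Here $E=-\sum_j\log\|x-x_j\|$ is harmonic off the points, the trajectories are exactly the level curves of the harmonic conjugate $\widetilde E=-\sum_j\arg(x-x_j)$, and the flux $\int|\nabla E|$ through a flow tube is conserved. Since $\widetilde E\sim -n\,\theta$ at infinity, the total conjugate increment is $2\pi n$, so $2\pi n\,p_i=\operatorname{osc}_{A_i}\widetilde E\le\int_{A_i}|\partial_\tau\widetilde E|\,d\ell=\int_{A_i}|\nabla E|\,d\ell$ by Cauchy--Riemann. On $A_i$ every $x_j$ is at distance $\ge 1$, so $|\nabla E|\le\sum_j\|x-x_j\|^{-1}$, and the packing bound $\sum_j\|x-x_j\|^{-1}\lesssim\sqrt n$ (rings of radius $k$ about a point of $A_i$ contain $\lesssim k$ of the $x_j$, out to $k\sim\sqrt n$) gives $\int_{A_i}|\nabla E|\lesssim\sqrt n$ and hence $p_i\lesssim n^{-1/2}$.

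For $0<\alpha\le 1$ the field is no longer harmonic but $E$ is subharmonic, $\Delta E=\alpha^2\sum_j\|x-x_j\|^{-\alpha-2}\ge 0$. I would replace the harmonic conjugate by a positive integrating factor $g$ solving $\operatorname{div}(g\nabla E)=0$; its flow function $\psi$ (with $g\nabla E=\nabla^{\perp}\psi$) is constant along trajectories, and normalizing $\psi$ to equal the incoming angle at infinity yields $2\pi p_i=\operatorname{osc}_{A_i}\psi\le\int_{A_i}g\,|\nabla E|\,d\ell$. Two facts should control the right-hand side: along trajectories $\tfrac{d}{ds}\log g=-\Delta E/|\nabla E|\le 0$, so $g$ is largest in the far field, where the point-mass asymptotics $E\sim n\rho^{-\alpha}$ force $g\sim\rho^{\alpha}/(\alpha n)$; and the packing estimate now reads $\sum_j\|x-x_j\|^{-(1+\alpha)}\lesssim_\alpha n^{(1-\alpha)/2}$ for $0\le\alpha<1$ (the same ring count, via $\sum_{k\le\sqrt n}k^{-\alpha}$), bounding $\int_{A_i}|\nabla E|$. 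Feeding these into the boundary integral is what should produce the exponent $\tfrac{\alpha-1}{2\alpha+2}$, the restriction $\alpha\le 1$ being exactly what keeps the packing sum in its $n^{(1-\alpha)/2}$ regime.

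\textbf{The main obstacle} I anticipate is the global control of $g$ on $A_i$: the monotonicity $\tfrac{d}{ds}\log g\le 0$ only bounds $g$ on $A_i$ by its value where the trajectory enters the cluster, which introduces the cluster scale rather than a clean power of $n$. Converting this into the stated bound is a genuine focusing (extremal-length / Beurling-type) estimate: one must show that the defocusing accumulated along the trajectory, $\int \Delta E/|\nabla E|\,ds$, is large enough that the flux funneled into any single $x_i$ cannot exceed the claimed fraction. Making $g$ (equivalently $\psi$) globally well defined and positive across the separatrices of the finitely many critical points, and ruling out that trajectories get trapped, is the technical heart, and is precisely where property (P) and the hypothesis $\alpha\le 1$ enter.
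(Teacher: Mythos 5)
Your $\alpha=0$ argument is essentially correct and complete in substance: for the logarithmic energy the field is harmonic, the flux through a flow tube is exactly conserved, and combining the conserved flux with the packing bound $\sum_j\|x-x_j\|^{-1}\lesssim\sqrt n$ on the attachment circle gives $p_i\lesssim n^{-1/2}$. This is the same mechanism as the paper's (there it is run through Green's theorem, $\int_{\Omega_r}\Delta E\,dx=\int_{\partial\Omega_r}\nabla E\cdot dn$, on the flow tube, rather than through the harmonic conjugate), and it yields the optimal exponent. For $0<\alpha\le 1$, however, the proposal has a genuine gap, which you identify yourself: the entire proof hinges on an upper bound for the integrating factor $g$ on the arc $A_i$, equivalently a lower bound on the accumulated defocusing $\int\Delta E/|\nabla E|\,ds$ along trajectories, and this estimate is never carried out. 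The supporting structure is also only promised, not proved: $g$ is defined trajectory-by-trajectory through an ODE with data at infinity, it degenerates along stable manifolds of the saddles, and the identity $\operatorname{div}(g\nabla E)=0$ must hold distributionally across these singular sets for flux conservation to be legitimate; likewise $\psi$ must be shown to be single-valued and absolutely continuous on $A_i$. Property (P) makes this plausible but none of it is routine. As written, the argument proves the theorem only for $\alpha=0$.

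The comparison with the paper is instructive, because the paper's proof is built to avoid exactly the step you are missing. Instead of an exactly conserved flux, it uses only the one-sided inequality $\int_{\partial\Omega_r}\nabla E\cdot dn=\int_{\Omega_r}\Delta E\,dx\ge 0$ on the finite tube $\Omega_r$, $r\ge 10n$; bounding the two nonzero boundary contributions (packing near $x_i$, near-radial forces on $A_r$) gives $|A_r|/(2\pi r)\le c_\alpha r^{\alpha}n^{-(\alpha+1)/2}$, a bound that degrades as $r$ grows, so one cannot let $r\to\infty$. The loss is then absorbed by a second, purely geometric lemma (far from the cluster all forces lie in convex cones), which yields $p\le 100\,|A_{100n/p}|/(100n/p)$; evaluating the first bound at the self-referential radius $r=100n/p$ gives $p^{1+\alpha}\lesssim n^{(\alpha-1)/2}$, i.e.\ precisely the exponent $\frac{\alpha-1}{2\alpha+2}$. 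Note that your program, if completed, would give the stronger bound $p\lesssim n^{(\alpha-1)/2}$ (your ``cluster scale'' worry is not fatal in principle, since the trivial bound $\mathrm{diam}\le n$ together with the monotonicity of $g$ would already give $g|_{A_i}\lesssim n^{\alpha-1}/\alpha$): this is exactly the conjectural improvement labelled $(\diamond)$ in the paper, stated there as open even after the proof of Theorem 2. That your route would, if the analytic details were filled in, prove a statement the paper explicitly does not know how to prove is a strong indication that the missing estimate is the real mathematical difficulty, not a technicality one can defer.
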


The result is optimal for $\alpha = 0$ where it gives rate $\lesssim n^{-1/2}$.  The estimate gives nontrivial results all the way up to $\alpha =1$. There is no reason to assume the result is optimal, we comment on this more extensively after the proof.

\subsection{The growth estimate.} Beurling-type estimates can be used to derive growth bounds on the diameter. Originally due to Kesten \cite{kesten0, kesten1, kesten2} (`Kesten's method'), this is now well understood: we adapt a robust and very general framework developed by Benjamini-Yadin for DLA on graphs \cite{ben}.
\begin{theorem}[Growth Bound] Let $0 \leq \alpha \leq 1$, let $(x_k)_{k=1}^{\infty}$ be obtained via GFA and satisfy property (P) throughout. Then, with high probability,
$$ \emph{diam} \left\{x_1, \dots, x_n \right\} \leq c_{\alpha} \cdot n^{\frac{3 \alpha +1}{2\alpha + 2}}.$$
\end{theorem}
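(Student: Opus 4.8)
The plan is to deduce the diameter bound from the Beurling estimate (Theorem 2) via Kesten's method, following the abstract framework of Benjamini--Yadin \cite{ben}. The heuristic driving the whole argument is a discrete differential inequality: if $D_n$ denotes the diameter after $n$ particles have been placed, then a new particle can only increase the diameter if it attaches to one of the roughly $O(D_n)$ ``outermost'' particles lying near the boundary of the cluster. The Beurling estimate controls the probability of hitting any single such particle by $c_\alpha \cdot n^{(\alpha-1)/(2\alpha+2)}$, so the expected increment of the diameter per particle is at most (number of exposed particles) times (Beurling bound per particle). Summing (or integrating) this increment over $n$ particles should produce the stated exponent $\tfrac{3\alpha+1}{2\alpha+2}$. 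Indeed, a quick consistency check: at $\alpha=0$ the Beurling bound is $n^{-1/2}$, the number of boundary particles at diameter $D$ is $O(D)$, and balancing $\frac{dD}{dn} \sim D \cdot n^{-1/2} \cdot D^{-1}$ type relations recovers $D_n \sim \sqrt{n} = n^{1/2}$, matching $\tfrac{3\cdot 0+1}{2\cdot 0+2} = \tfrac12$.

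To make this rigorous I would first set up the correct stopping-time / martingale machinery. Following Kesten, the key is not to track the diameter directly but to track the arrival times: fix a target radius $R$ and ask how many particles $N(R)$ must arrive before the cluster first reaches diameter $R$. First I would establish that while the cluster has diameter at most $R$, it is contained in a disk of radius $R$, and the particles capable of extending the diameter are confined to an annular/boundary region whose cardinality I bound by a covering argument (particles are disjoint disks of radius $1/2$, so at most $O(R)$ of them can sit within bounded distance of any extremal point). Then for each newly arriving particle I would couple the event ``the diameter increases'' to the event ``the particle attaches to one of these exposed boundary particles,'' and apply Theorem 2 with $n$ equal to the current particle count to bound that probability. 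This yields a supermartingale or a submartingale estimate on a suitably rescaled functional of the diameter.

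The technical heart is then a summation/iteration argument. Writing $\tau_k$ for the number of particles needed to grow the diameter from $k$ to $k+1$ (or some dyadic analogue), the expected number of arrivals at diameter $\approx D$ needed to push the boundary out by one unit is the reciprocal of the total escape probability, i.e.\ roughly $\bigl(D \cdot c_\alpha n^{(\alpha-1)/(2\alpha+2)}\bigr)^{-1}$ where $n \approx$ the current count. Feeding the relation $n \approx$ (area or mass of the cluster, which is itself polynomial in $D$) into this and summing over the dyadic scales of $D$ should yield $n \gtrsim D^{(2\alpha+2)/(3\alpha+1)}$, which inverts to the claimed bound $D \lesssim n^{(3\alpha+1)/(2\alpha+2)}$. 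The ``with high probability'' statement I would obtain by converting the expectation bounds into concentration via the martingale structure and a standard Borel--Cantelli / maximal inequality argument over dyadic scales, exactly as in the Benjamini--Yadin treatment.

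The main obstacle I anticipate is not the probabilistic bookkeeping, which is essentially a black-box application of \cite{ben} once the inputs are verified, but rather the geometric step of correctly identifying and counting the ``exposed'' particles and certifying that only they can increase the diameter. In the graph/lattice DLA setting of Benjamini--Yadin this counting is combinatorially clean, whereas here the cluster lives in the continuum $\mathbb{R}^2$ and the attachment rule is governed by gradient flow of $E$ rather than by harmonic measure; I must check that a particle increasing the diameter must attach within a controlled distance of an extremal point, and that property $(P)$ together with Theorem 1 guarantees the escape probabilities are well defined at each step. Reconciling the exponent arithmetic so that the boundary-count exponent and the Beurling exponent combine to give precisely $\tfrac{3\alpha+1}{2\alpha+2}$ will require care, but it is the same algebraic balancing that produces Kesten's $n^{2/3}$ bound for lattice DLA (recovered here in the limiting behavior of the formula), so I expect it to go through.
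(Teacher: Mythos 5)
There is a genuine gap, and it sits exactly where your exponent arithmetic is fudged. Write $\beta = \frac{1-\alpha}{2\alpha+2}$ for the Beurling exponent, so the claimed growth bound is $\mathrm{diam} \lesssim n^{1-\beta}$. Your per-step bookkeeping bounds $\mathbb{P}(\mbox{diameter increases at step } n)$ by (number of exposed particles) $\times$ (Beurling bound) $\approx D \cdot n^{-\beta}$, and then sums. But this differential inequality, $\frac{dD}{dn} \lesssim D\, n^{-\beta}$, integrates to $D \lesssim \exp(c\, n^{1-\beta})$ — an \emph{exponential} bound, not the claimed polynomial one; equivalently, in your stopping-time formulation, $\frac{dn}{dD} \gtrsim n^{\beta}/D$ gives $n^{1-\beta} \gtrsim \log D$, which is vacuous. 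Your own consistency check at $\alpha=0$ silently repairs this by inserting a factor $D^{-1}$ (``$\frac{dD}{dn} \sim D \cdot n^{-1/2} \cdot D^{-1}$''), i.e.\ by pretending the exposed set has $O(1)$ rather than $O(D)$ elements — but that is precisely the assertion your covering argument does not deliver, since a cluster can legitimately have $\Theta(D)$ nearly-extremal particles (think of a round cluster, which is exactly what $\alpha=0$ produces). So the union bound over exposed boundary particles, applied step by step, is too lossy to produce the exponent $\frac{3\alpha+1}{2\alpha+2}$; no rearrangement of that bookkeeping fixes it.

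The paper's proof (following Benjamini--Yadin) avoids per-step diameter tracking entirely, and this is the idea your proposal is missing. First one reduces to combinatorics: with probability $1$ each new particle attaches to exactly one old one, so the cluster is a tree embeddable in the infinite $6$-regular tree, and the Euclidean diameter is dominated by the tree diameter. Then, going from $n$ to $2n$ particles, one fixes a single self-avoiding path of length $L = c\,n^{1-\beta}$ emanating from $G_n$ and observes that for this path to be fully added, each of its vertices must be hit \emph{in sequence}: the $i$-th arrival extends the path with probability at most $c_\alpha n^{-\beta}$ (Beurling applied to the specific current tip — an $O(1)$ target, which is where your missing $D^{-1}$ really comes from), so the number of extensions is stochastically dominated by a sum of Bernoullis with mean $\leq c_\alpha n^{1-\beta}$, and the Chernoff bound of Lemma~\ref{lem:prob} makes the event $\{\mbox{at least } L \mbox{ extensions}\}$ exponentially unlikely in $L$. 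The multiplicity of the boundary is then paid for \emph{once}, as an entropy term: there are at most $n \cdot 6^{L}$ such paths, and choosing $c = 1000\,c_\alpha$ makes the Chernoff exponent beat $\log 6$ per unit length. Dyadic summation over scales $n, 2n, 4n, \dots$ plus Borel--Cantelli gives $\mathrm{diam} \lesssim \sum_k (2^k)^{1-\beta} \lesssim n^{1-\beta}$. In short: the correct object to union-bound over is the set of candidate \emph{arms} (with an exponential-in-length count defeated by an exponential-in-length probability), not the per-step event that the diameter grows; your proposal cites this framework as a black box but the structure you actually set up is a different, and non-working, one.
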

This is optimal for $\alpha = 0$: any set of $n$ points in $\mathbb{R}^2$ satisfying $\|x_i - x_j\| \geq 1$ has diameter at least $c \cdot n^{1/2}$. Thus, GFA with $\alpha = 0$ grows at the slowest possible rate (see Fig. \ref{fig:4}) and GFA with $0 < \alpha \ll 1$ small behaves similarly. Kesten's method allows to translate Beurling estimates into growth estimates but it's clear that the argument is inherently lossy: this is the case for DLA and it is equally the case for GFA. It would be very desirable if the simplified framework of GFA were to simplify the investigation of new arguments that might reasonably break this barrier.

\subsection{The case $\alpha \rightarrow \infty$} One nice aspect of GFA is that the case of $\alpha = \infty$ is particularly easy to describe and simulate (see Fig. \ref{fig:5}). It should be related, in spirit, to the behavior of DBM for a suitable $\eta$ (maybe, see \cite{phase}, with a value of $\eta$ close to $4$?).
We first describe the model. 
There is a natural limit in that
$$  \lim_{\alpha \rightarrow \infty} \left( \sum_{i=1}^{n}\frac{1}{\|x - x_i\|^{\alpha}} \right)^{\frac{1}{\alpha}}  = \max_{1 \leq i \leq n} \frac{1}{\|x-x_i\|^{}}.$$
This suggests a particularly natural limiting process: given $\left\{x_1, \dots, x_n\right\} \subset \mathbb{R}^2$, create the next point $x_{n+1}$ by adding a `random point at infinity' (in the same way as above) and then moving it along $\nabla E$. In this case, the gradient $\nabla E$ (which is defined almost everywhere) is simply going to point to the nearest particle in the set. It is not difficult to see that this never changes along the entire flow. This means the particle follows a straight line until the corresponding disks touch and the process stops.
The geometry in this case becomes a lot simpler to analyze, in particular new particles only attach themselves to the convex hull of the existing particles (and the likelihoods can be computed in terms of opening angles).

\begin{center}
\begin{figure}[h!]
\begin{tikzpicture}
\node at (0,0) {\includegraphics[width= 0.2\textwidth]{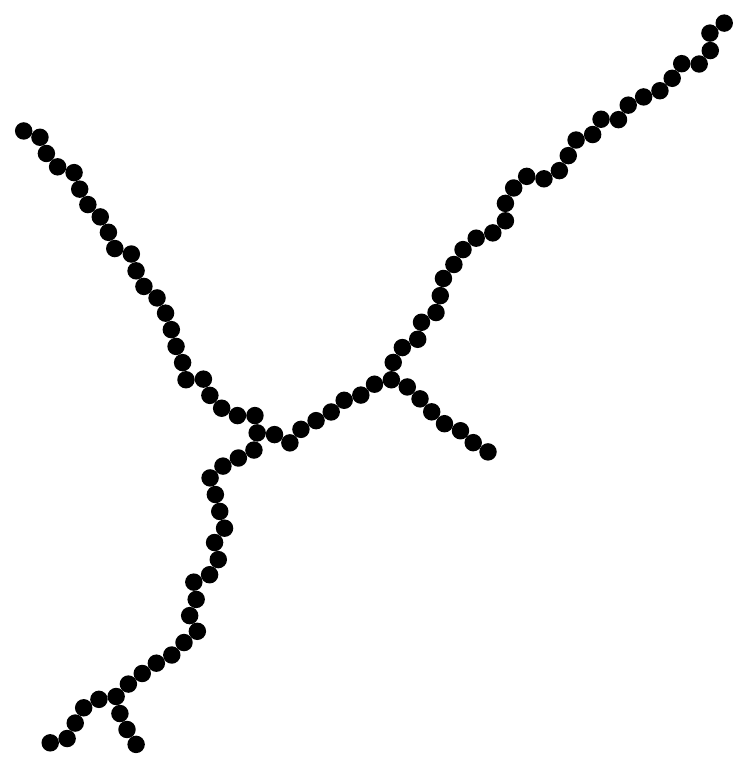}};
\node at (4,0) {\includegraphics[width= 0.3\textwidth]{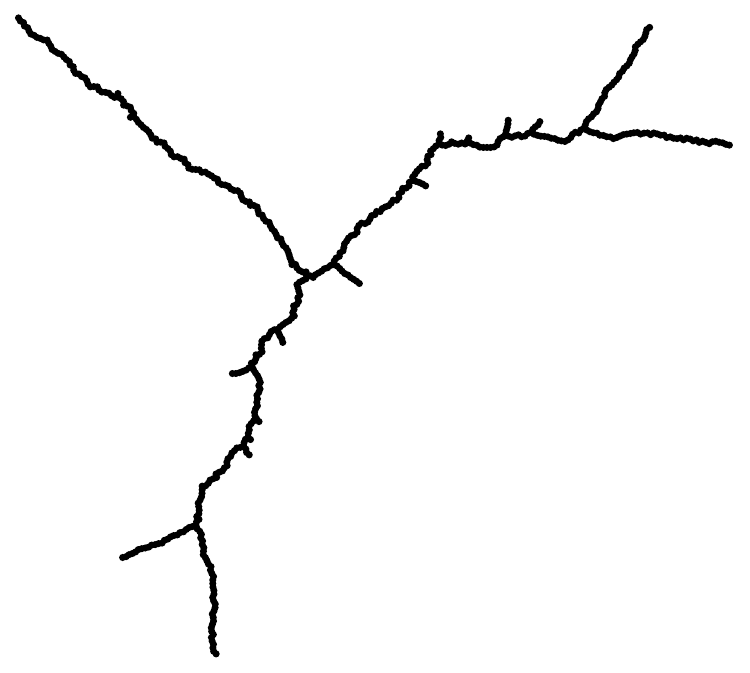}};
\node at (9,0) {\includegraphics[width= 0.3\textwidth]{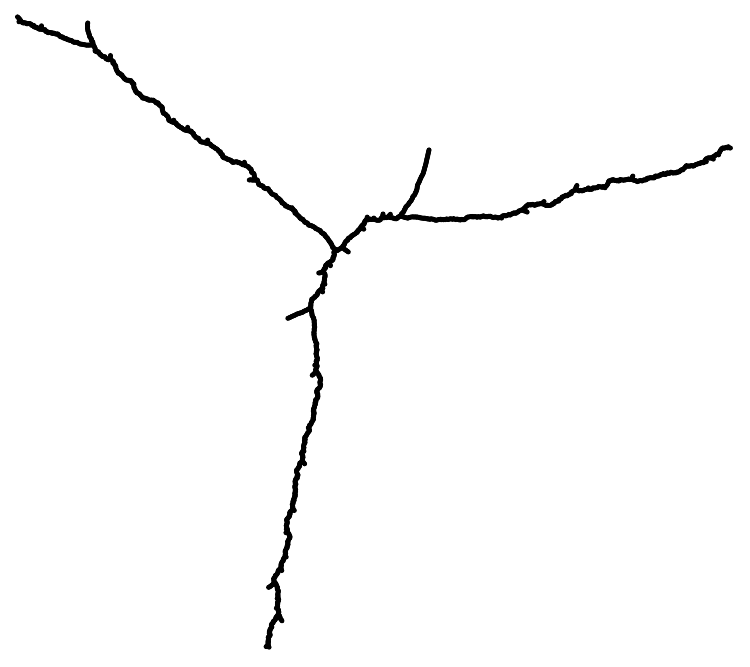}};
\draw [] (-1.3,-1.3) -- (1.3,-1.3) -- (1.3,1.3) -- (-1.3,1.3) -- (-1.3,-1.3);
\draw [] (3.4, -0.1) -- (4.2, -0.1) -- (4.2, 0.7) -- (3.4, 0.7) -- (3.4, -0.1);
\draw (2.1, -1.7) -- (5.9, -1.7) -- (5.9, 1.7) -- (2.1, 1.7) -- (2.1, -1.7);
\draw (1.3, -1.3) -- (3.4, -0.1);
\draw (1.3, 1.3) -- (3.4, 0.7);
\draw (5.9, -1.7) -- (8.45, -0.05);
\draw (5.9, 1.7) -- (8.45, 0.75);
\draw (8.45, -0.05) -- (9.25, -0.05) -- (9.25, 0.75) -- (8.45 , 0.75) -- (8.45, -0.05);
\end{tikzpicture}
\caption{$n=100$ (left), $n=500$ (middle) and $n=2000$ (right) steps of the evolution of GFA with $\alpha = \infty$ tree.}
\label{fig:5}
\end{figure}
\end{center}

\begin{proposition}
$x_{n+1}$ can only attach to particles in the convex hull of $\left\{x_1, \dots, x_{n}\right\}$. If $x_k$ is a particle in the complex hull with (interior) opening angle $\alpha$ then
$$ \mathbb{P}\left(x_{n+1}~\emph{attaches itself to}~x_k \right) = \frac{\pi - \alpha}{2\pi}.$$
\end{proposition}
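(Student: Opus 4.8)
\subsection*{Proof proposal}

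The plan is to exploit the explicit description of the $\alpha=\infty$ dynamics. First I would record the key structural fact: writing $E(x)=\max_i \|x-x_i\|^{-1} = \|x-x_{j(x)}\|^{-1}$, where $x_{j(x)}$ denotes a nearest particle to $x$, the energy is smooth away from the Voronoi boundaries and there $\nabla E(x) = -(x-x_{j(x)})/\|x-x_{j(x)}\|^{3}$, a positive multiple of $x_{j(x)}-x$. Thus gradient ascent always pushes the incoming particle straight toward its current nearest neighbor. The crucial observation is that the nearest neighbor cannot change along the flow: the set of points for which $x_j$ is a nearest particle is exactly the Voronoi cell of $x_j$, an intersection of half-planes and hence convex; since both the current position and the target $x_j$ lie in this convex cell, the entire segment between them does too. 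Therefore the flow is a straight ray toward a single particle $x_j$, terminating when it first reaches distance $1$; and since $x_j$ stays nearest throughout, this happens precisely when it reaches distance $1$ from $x_j$ (no other particle can be reached first).

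Next I would identify which particle a ray from infinity selects. A point at radius $r$ and angle $\theta$, namely $P=r\,u_\theta$ with $u_\theta=(\cos\theta,\sin\theta)$, satisfies $\|P-x_i\| = r - \langle x_i,u_\theta\rangle + O(1/r)$, so for all sufficiently large $r$ the nearest particle is the maximizer of the linear functional $x_i\mapsto \langle x_i,u_\theta\rangle$. For every direction $u_\theta$ outside the finite set of directions perpendicular to an edge of the convex hull (a measure-zero set of angles) this maximizer is unique and is necessarily a vertex of the convex hull of $\{x_1,\dots,x_n\}$: an interior point, or a non-extreme boundary point, never strictly maximizes a linear functional. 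By the first step the incoming particle then attaches exactly to this extreme vertex, which proves the first assertion that only convex-hull particles are hit.

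Finally I would compute the probability. Since $\theta$ is uniform on $[0,2\pi)$, the quantity $\mathbb{P}(x_{n+1}\text{ attaches to }x_k)$ equals $(2\pi)^{-1}$ times the angular measure of the set of directions $u_\theta$ for which $x_k$ maximizes $\langle x_i,u_\theta\rangle$. This set is the normal cone of the convex hull at the vertex $x_k$, i.e.\ the cone of outward normals spanned by the outward normals of the two hull edges adjacent to $x_k$. Its angular width is the exterior angle at $x_k$, which equals $\pi$ minus the interior opening angle $\alpha$; dividing by $2\pi$ gives the claimed $\tfrac{\pi-\alpha}{2\pi}$. As a consistency check, the exterior angles of a convex polygon sum to $2\pi$, so the probabilities sum to $1$; and a boundary point that is not a strict vertex has interior angle $\pi$ and probability $0$, consistent with it being selected only for a single, measure-zero direction.

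The main obstacle is the rigor in the first step: justifying that the nearest particle truly never changes along the flow, including the passage to the limit $r\to\infty$ for the starting point, together with a careful treatment of the measure-zero set of angles $\theta$ on which the flow meets a Voronoi boundary (a tie between two nearest particles) or on which the support functional has a non-unique maximizer. Everything else is convex-geometry bookkeeping; the substance lies entirely in confirming that the dynamics collapse to straight rays landing on extreme points.
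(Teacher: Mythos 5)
Your proposal is correct, and its overall strategy matches the paper's: collapse the $\alpha=\infty$ dynamics to straight-line motion toward the nearest particle, then compute for each hull vertex the angular measure of directions at infinity from which that vertex is nearest. The differences lie in how each step is justified, and your version is rigorous exactly where the paper is informal. For the dynamics, the paper simply asserts (in \S 1.5, before the Proposition) that the nearest particle ``never changes along the entire flow''; your one-line justification --- the Voronoi cell of $x_j$ is an intersection of half-planes, hence convex, and contains both the moving point and $x_j$, so the segment between them stays in the cell --- is the clean proof of that assertion, and it also settles why the flow terminates at distance $1$ from $x_j$ and not from some other particle. For the angle, the paper argues that the circles $\|y-x\|=\mathrm{const}$ flatten to lines as $\|y\|\to\infty$, draws the perpendicular bisectors between consecutive hull vertices, and reads off their asymptotic opening angle $\pi-\alpha$ by trigonometry; you instead expand $\|ru_\theta - x_i\| = r - \langle x_i,u_\theta\rangle + O(1/r)$ and identify the winning directions as the normal cone at $x_k$, whose width is the exterior angle. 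These are dual descriptions of the same cone (the asymptotic directions of the two bisectors are precisely the outward normals of the two adjacent hull edges), so the geometric content is identical; what your formulation buys is that the exceptional sets (ties in the support functional, Voronoi boundaries) are visibly measure zero and the $r\to\infty$ limit follows from pointwise convergence of indicators plus dominated convergence, points the paper leaves implicit. The ``main obstacle'' you flag at the end is therefore already resolved by your own argument and is not a gap.
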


\begin{center}
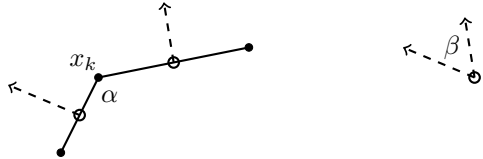
\begin{figure}[h!]
\begin{tikzpicture}
\filldraw (0,0) circle (0.05cm);
\filldraw (2,0.4) circle (0.05cm);
\filldraw (-0.5,-1) circle (0.05cm);
\draw [thick] (-0.5, -1) -- (0,0) -- (2, 0.4);
\node at (0.15, -0.25) {$\alpha$};
\node at (-0.2, 0.2) {$x_k$};
\draw[thick]  (1, 0.2) circle (0.07cm);
\draw[thick]  (-0.25, -0.5) circle (0.07cm);
\draw[thick, dashed, ->] (1, 0.2) -- (0.87, 1);
\draw[thick, dashed, ->] (-0.25, -0.5) -- (-1.2, -0.1);
\draw[thick]  (5, 0) circle (0.07cm);
\draw[thick, dashed, ->] (5, 0) -- (4.87, 0.8);
\draw[thick, dashed, ->] (5, 0) -- (4.05, 0.4);
\node at (4.7, 0.4) {$\beta$};
\end{tikzpicture}
\caption{Left: the likelihood of attaching itself to an existing particle $x_k$ on the convex hull is $\beta/(2\pi) = (\pi - \alpha)/(2\pi)$}
\label{fig:12}
\end{figure}
\end{center}

We note that this also means that the simulation of GFA for $\alpha = \infty$ is numerically simple: it suffices to keep track of the convex hull of the existing set of points, add $x_{n+1}$ and then recompute the new convex hull. Since the cardinality of the convex hull seems to be, asymptotically, bounded, computation of $x_{n+1}$ can be as cheap as $\mathcal{O}(1)$. This allows for a construction of trees with a large number of points.

\begin{center}
\begin{figure}[h!]
\begin{tikzpicture}
\node at (0,0) {\includegraphics[width= 0.21\textwidth]{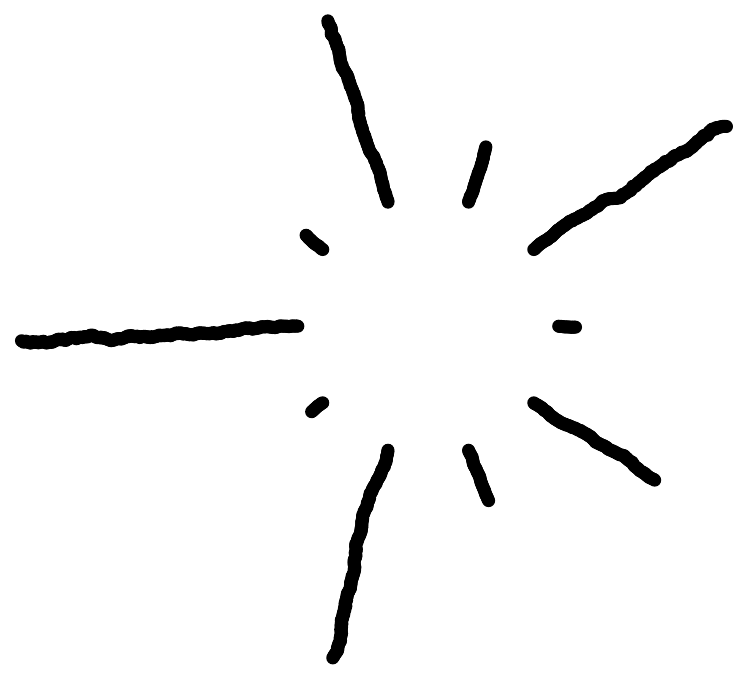}};
\node at (3.2,0) {\includegraphics[width= 0.27\textwidth]{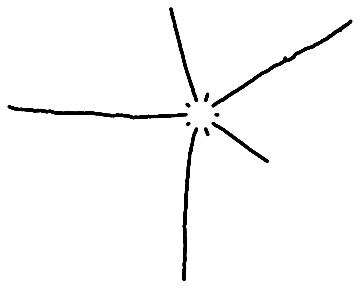}};
\node at (6.4,0) {\includegraphics[width= 0.25\textwidth]{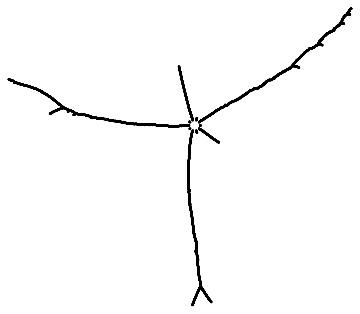}};
\node at (9.6,0) {\includegraphics[width= 0.26\textwidth]{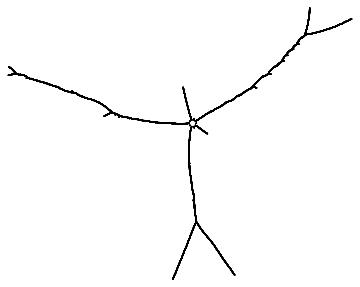}};
\end{tikzpicture}
\caption{Starting with the 10 vertices of a regular polygon at distance 100 from the origin. The evolution after 1.000, 5.000, 15.000 and 25.000 steps, respectively.}
\label{fig:6}
\end{figure}
\end{center}

This appears to be an interesting dynamical system. The growth of its diameter is approximately inversely proportional to the number of points that constitute its convex hull. It is intuitively clear that the dynamical system does not favor a large number of points in the convex hull which then suggests ballistic growth. The reason for the small number of points in the convex hull is, heuristically, the following: one of the existing points in the convex hull is bound to have a smallest angle and will then attract the largest growth. If all opening angles are the same, then random fluctuations will naturally lead to some of them being a little bit larger than others and then the natural drift sets in. In examples, see Fig. \ref{fig:6}, one tends to observe a natural tendency to shapes that appear roughly like three or four long branches that meet roughly at an equal angle. However, this does not seem to be a stable configuration and one sometimes observes shorter segments trying to branch off (and sometimes they succeed). It could be an interesting avenue for further research to understand the case $\alpha = \infty$ better.

\subsection{What to expect in higher dimensions.} Some of our arguments employ the particularly simple geometry/topology in two dimensions but it seems conceivable that many of the arguments extend to higher dimensions. We note that there is one particularly simple case, the case of
$$ E(x) = \sum_{i=1}^{n} \frac{1}{\|x-x_i\|^{d-2}} \qquad \mbox{in}~\mathbb{R}^d~\mbox{with}~d \geq 3.$$
The main reason is, unsurprisingly, that $E$ is now a harmonic function and $\Delta E = 0$. This allows us to use Lemma \ref{lem:main} almost verbatim while avoiding the use of Lemma \ref{lem:ang} (whose generalization to higher dimensions seems nontrivial).

\begin{theorem}
Consider GFA in $\mathbb{R}^d$, $d \geq 3$, and $\alpha = d-2$. Given $\left\{x_1, \dots, x_n\right\} \subset \mathbb{R}^d$,
$$ \max_{1 \leq i \leq n} \mathbb{P}\left(\emph{new particles hits}~x_i\right) \leq c_{d}  \cdot n^{\frac{1}{d} - 1}$$
and
$$ \emph{diam} \left\{x_1, \dots, x_n \right\} \leq c_{d} \cdot n^{\frac{d-1}{d}}.$$
\end{theorem}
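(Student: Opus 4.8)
The plan is to prove the two displayed inequalities in turn, the Beurling-type bound first and then the diameter bound by Kesten's method. The entire simplification in this regime comes from the choice $\alpha = d-2$: the summand $\|x-x_i\|^{-(d-2)}$ is (a constant multiple of) the Newtonian kernel, so $E$ is harmonic on $\mathbb{R}^d \setminus \{x_1,\dots,x_n\}$ with $\Delta E = -(d-2)|\mathbb{S}^{d-1}| \sum_i \delta_{x_i}$, each particle acting as a point sink of strength $(d-2)|\mathbb{S}^{d-1}|$. Consequently $\nabla E$ is divergence-free away from the particles, points toward the particles, and the gradient ascent flow is flux-preserving. This is precisely the structure needed to apply Lemma~\ref{lem:main} verbatim, and it is also why the two-dimensional auxiliary Lemma~\ref{lem:ang}, which exists only to correct for the failure of harmonicity when $\alpha \neq 0$, can be dispensed with entirely.

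For the Beurling estimate I would first record, reusing the mechanism behind Theorem 1, that the hitting probability is a flux fraction. As $r \to \infty$ the field $\nabla E$ on $\{\|x\|=r\}$ is to leading order radial and of constant magnitude, so the uniform starting measure becomes uniform-by-flux; since flux is conserved along the divergence-free flow, $\mathbb{P}(\text{hit }x_i) = F_i/\bigl((d-2)|\mathbb{S}^{d-1}|\,n\bigr)$, where $(d-2)|\mathbb{S}^{d-1}|\,n$ is the total flux and $F_i$ is the flux carried by the basin of $x_i$ across its landing region. Every trajectory crosses its landing region inward, so $F_i$ is at most the inward flux through a sphere $\partial B_\rho(x_i)$ with $\rho \in [1,2]$ chosen to meet no particle (this choice is what defuses the singularity created by the particles bonded to $x_i$ at distance exactly $1$). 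Bounding that inward flux crudely by $\int_{\partial B_\rho(x_i)} |\nabla E|\,dS \leq (d-2)\sum_{j}\int_{\partial B_\rho(x_i)} \|x-x_j\|^{-(d-1)}\,dS \lesssim_d \sum_{j \neq i}\|x_i-x_j\|^{-(d-1)} + O_d(1)$ reduces everything to a packing estimate. Decomposing the particles into dyadic shells $\{2^k \leq \|x-x_i\| < 2^{k+1}\}$, the separation condition gives at most $\min(C_d 2^{kd}, n)$ particles per shell, each contributing $\lesssim 2^{-k(d-1)}$; summing the two regimes $2^{kd} \leq n$ and $2^{kd} > n$ separately yields $\sum_{j\neq i}\|x_i-x_j\|^{-(d-1)} \lesssim_d n^{1/d}$. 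Hence $F_i \lesssim_d n^{1/d}$ and $\mathbb{P}(\text{hit }x_i) \lesssim_d n^{1/d-1}$, which is the first claim (and recovers the rate $n^{-1/2}$ of Theorem 2 when $d=2$).

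For the diameter bound I would feed this estimate into the Kesten / Benjamini–Yadin machinery exactly as in the proof of Theorem 3, the only change being the $d$-dimensional shell geometry. The abstract scheme converts a uniform hitting bound $p_{\max}(n)$ into a high-probability diameter bound of the form $\mathrm{diam}\{x_1,\dots,x_n\} \lesssim_d n \cdot p_{\max}(n)^{1/(d-1)}$, the exponent $1/(d-1)$ being the codimension of the radial shells through which an advancing arm must spread. With $p_{\max}(n) \lesssim_d n^{1/d-1}$ this produces $\mathrm{diam} \lesssim_d n \cdot n^{(1/d-1)/(d-1)} = n^{(d-1)/d}$, as asserted; for $d=2$ it collapses to $n \cdot p_{\max} = n^{1/2}$, consistent with Theorem 3.

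I expect the main obstacle to be the flux-fraction identity rather than the two estimates themselves. Making $\mathbb{P}(\text{hit }x_i) = F_i/\bigl((d-2)|\mathbb{S}^{d-1}|\,n\bigr)$ rigorous requires property $(P)$ to guarantee finitely many critical points, so that the separatrices between basins form a null set and the $r \to \infty$ limit of the uniform measure is genuinely the flux measure; this is exactly the input carried over from the proof of Theorem 1. A secondary technical point is the integrability of $|\nabla E|$ on $\partial B_\rho(x_i)$ near a particle, handled by the freedom in $\rho \in [1,2]$ together with the $O_d(1)$ bound on the number of particles within distance $2$ of $x_i$. Once the Beurling estimate is established, the growth bound is essentially a black-box application of the existing framework, with only the packing geometry adapted to $\mathbb{R}^d$.
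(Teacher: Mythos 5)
Your proof of the Beurling estimate is, in substance, the paper's own proof: harmonicity of $E$ for $\alpha = d-2$, the divergence theorem applied to the bundle $\Omega$ of trajectories ending at $x_i$ (with zero flux through the lateral boundary, which consists of flow lines), the far-field flux being $(1+o(1))\,(d-2)\,|\mathbb{S}^{d-1}|\, n \cdot \mathbb{P}(\mbox{hit } x_i)$, and a packing bound $\sum_{j}\|x-x_j\|^{-(d-1)} \lesssim_d n^{1/d}$ for the near field. Your only deviations are cosmetic: the paper bounds the flux directly on the landing set $\partial\Omega \cap \{z : \|z-x_i\|=1\}$, where every point is automatically at distance $\geq 1$ from all particles, so no singularity arises and no generic radius $\rho \in [1,2]$ is needed; and your dyadic-shell version of the packing sum is, if anything, slightly more careful than the paper's. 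This half is correct and matches the paper.

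The diameter half contains a genuine gap. The conversion you invoke, $\mathrm{diam} \lesssim_d n \cdot p_{\max}(n)^{1/(d-1)}$, with the exponent $1/(d-1)$ attributed to ``the codimension of the radial shells,'' is not Kesten's method and is not justified (nor do I see how it could be): in the Benjamini--Yadin scheme the dimension enters only through the kissing number, i.e.\ through the branching constant in the union bound over paths, and the conversion is \emph{linear} in $p_{\max}$. Each of the $n$ added particles extends a fixed path with probability at most $p_{\max}$, so Lemma \ref{lem:prob} plus the union bound over at most $n k^{L}$ paths gives arm growth $\lesssim n\, p_{\max}(n)$ per dyadic doubling, whence $\mathrm{diam} \lesssim n\, p_{\max}(n)$. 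With $p_{\max}(n) \lesssim_d n^{-(d-1)/d}$ this yields $\mathrm{diam} \lesssim_d n^{1/d}$, which is \emph{stronger} than the stated $n^{(d-1)/d}$ and implies it a fortiori; this is exactly what the paper does (embed the GFA tree into the $k$-regular tree, $k$ the kissing number of $\mathbb{R}^d$, and run the two-dimensional argument unchanged). Note, incidentally, that the paper's own optimality remark (that $1$-separated sets have diameter $\gtrsim n^{(d-1)/d}$) should read $n^{1/d}$, consistent with the stronger bound; your formula appears to have been reverse-engineered to land on the stated exponent rather than derived from the method. So your final inequality is true, but the step producing it is not a proof. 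The repair is immediate: delete the $1/(d-1)$ formula and apply the growth lemma of \S 4.3 with Beurling exponent $(d-1)/d$.
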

We note that both estimates are clearly optimal: for \textit{any} aggregation scheme, no Beurling estimate can be better than $n^{\frac{1}{d} - 1}$ and this can be seen as follows: consider $n$ points arranged in some lattice structure so that only $\sim n^{(d-1)/d}$ are exposed as boundary points, by pigeonhole principle at least one of them is hit with likelihood $n^{-(d-1)/d}$. Likewise, any set of $n$ points that are $1-$separated has diameter at least $\sim n^{(d-1)/d}$. Theorem 4 suggests a phase transition when $\alpha = d-2$. One could therefore be led to expect GFA to produce ball-like shapes when $\alpha \leq d-2$ and do something else entirely when $\alpha > d-2$.

\subsection{Problems.} \label{sec:que} 
These results suggest a large number of questions. We quickly list some and note that there are many others.

\begin{enumerate}
\item We know the growth rate for the logarithmic case $\alpha =0$ which grows at the smallest possible rate. Is the limiting shape is a disk?
\item What is the sharpest possible Beurling estimate for GFA with $0 \leq \alpha < \infty$? Is the extremal set a set of $n$ points on a line?
\item How does the growth rate of GFA depend on $\alpha$? This is likely very difficult and already heuristic/numerical guesses might be of interest. We know that, for $\alpha$ close to 0, the rate is $\leq n^{\beta}$ with $1/2 \leq \beta \leq 1/2 + \alpha$. 
\item Related to the previous question, are there fast numerical methods to simulate GFA? The fast multipole method appears to be natural candidate.
\item Is the growth rate sub-ballistic for all $1 \leq \alpha < \infty$? Or is there a threshold parameter $0 < \alpha_0 < \infty$ such that GFA becomes ballistic for $\alpha \geq \alpha_0$?
\item Is it possible to prove a ballistic growth rate when $\alpha = \infty$? Numerically, this seems to be obviously the case. Do initial conditions matter?
More generally, what can be proven about GFA when $\alpha = \infty$?
\item There is a notion that DBM and Hastings-Levitov might belong to the same universality class. Does GFA fit into the picture or is it something else entirely? Is there a way in which GFA with parameter $\alpha$ behaves like DBM with parameter $\eta = \eta(\alpha)$? DBM seems to undergo a phase transition \cite{phase} for $\eta = 4$, does a similar phase transition happen for GFA?
\item Natural aspects of interest when studying GFA might be the evolution of $E(x)$, $\nabla E(x)$ and the distribution of the critical points. Is there anything rigorous that can be said about these objects?
\item  What happens if the potential $\phi(\|x-x_i\|)= \|x-x_i\|^{-\alpha}$ is replaced with some other monotonically decreasing and convex radial function? As long as there is control on the sign of the Laplacian of the radial function, some of our arguments might generalize. Is there a universality phenomenon in the sense that the evolution depends only weakly on $\phi$?
\item One could naturally consider all these questions in dimension $d \geq 3$ and many of our arguments will generalize (see also \S 1.6). As a tendency, things do not become easier in higher dimensions. However, it appears as if the simulation of the $\alpha = \infty$ tree remains equally simple in higher dimensions and it would be of interest to see what can be said.
\end{enumerate}

\begin{center}
\begin{figure}[h!]
\begin{tikzpicture}
\node at (0,0) {\includegraphics[width= 0.25\textwidth]{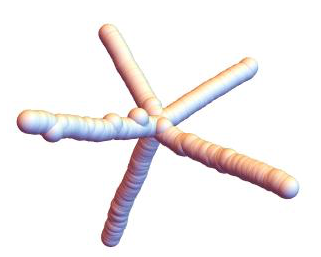}};
\node at (3,0) {\includegraphics[width= 0.25\textwidth]{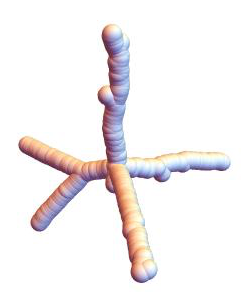}};
\node at (6,0) {\includegraphics[width= 0.25\textwidth]{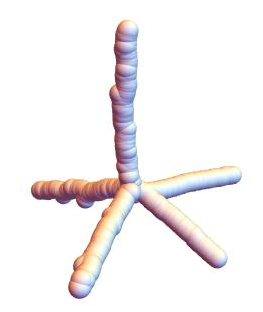}};
\node at (9,0) {\includegraphics[width= 0.25\textwidth]{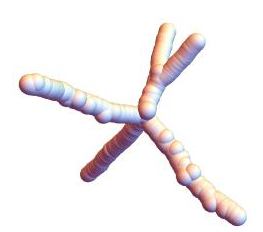}};
\end{tikzpicture}
\caption{Four examples of GFA with $\alpha=\infty$ in $\mathbb{R}^3$: the first 10.000 points. We see a tendency towards 5 tentacles.}
\label{fig:tree}
\end{figure}
\end{center}

\vspace{-30pt}

\section{Proof of Theorem 1}
This section is structured as follows. \S 2.1 details what is known about the set of critical points of $E(x)$ and discusses the role of property (P). \S 2.2 presents a quick argument that a finite number of critical points cannot have a big impact on the gradient flows (a more general version of the statement follows as a byproduct from our main argument and is later given as Lemma \ref{lem:dist} in \S \ref{sec:byproduct}). \S 2.3 shows that the set of gradient flows that end up touching a particular particle have to be topologically simple.
\S 2.4 gives a proof of Theorem 1.
\subsection{The critical set.}
We quickly discuss the set of critical points of
$$ E(x) =  \sum_{i=1}^{n}  \frac{1}{\|x-x_i\|^{\alpha}}$$
which is the set of points $x$ for which $\nabla E(x) = 0$. It can also be written as
$$ \mbox{set of critical points}  = \left\{x \in \mathbb{R}^2: \sum_{i=1}^{n} \frac{x-x_i}{\|x-x_i\|^{\alpha+2}} = 0\right\}.$$
The case $\alpha = 0$, corresponding to logarithmic energy, is completely understood: when $\alpha = 0$, then critical points can be written as the derivative of a complex-valued polynomial and there are at most $n-1$ distinct critical points because a complex polynomial of degree $n-1$ has exactly $n-1$ roots (this observation is sometimes attributed to Gauss). Note that this argument is restricted to $\mathbb{R}^2$.

Let us now consider the case $\alpha > 1$. As was only recently re-discovered \cite{gabrielov}, a particular special case dates back to James Clerk Maxwell \cite[Section 113]{maxwell} who claimed that for $\left\{x_1, \dots, x_n\right\} \subset \mathbb{R}^3$, 
$$\# \mbox{critical points of} \quad E(x) = \sum_{i=1}^{n} \frac{1}{\|x-x_i\|}  \leq (n-1)^2.$$
J. J. Thomson, when proofreading the book in 1891, added the remark `I have not been able to find any place where this result is proved.'  The problem is already interesting when $n=3$: the problem is then really two-dimensional since critical points can only occur in the convex hull of the three points. The question is thus whether any set of $n=3$ points in $\mathbb{R}^2$ leads to $E(x)$ with $\alpha = 1$ having at most 4 critical points. This is true and a rather nontrivial 2015 result of Tsai \cite{tsai}. For general $n$, it is not even known whether the critical set is always finite, not even in the plane (a situation accurately described by Shapiro \cite{shap} as `very irritating'). Under a non-degeneracy assumption, Killian \cite{killian} (improving work of Gabrielov, Novikov, Shapiro \cite{gabrielov}) showed that $n$ points in $\mathbb{R}^2$ generate at most $2^{2n-2} (3n-2)^2$ critical points. We refer to Gabrielov, Novikov, Shapiro \cite{gabrielov}, a recent result of Zolotov \cite{zolotov} and references therein for more details.\\

 We note that our setting suggests further relaxations of property (P). For example, suppose the set $\left\{x_1, \dots, x_n\right\}$ fails to have property (P). Then it appears to be quite conceivable that the set of points $x$ with the property that $\left\{x_1, \dots, x_{n-1}, x\right\}$ fails to have property (P) is necessarily small (say, measure 0). Any failure of property (P) should be very delicate. A result like this might not be out of reach and would suffice to show that property (P) is automatically satisfied along GFA with probability 1.  It is also conceivable that a more measure-geometric analysis of the set of critical points may lead to another relaxation. We have not pursued these possibilities at this time, they may be interesting future directions.

\subsection{Gradient flows and critical points.} \label{sec:grad}
We quickly argue that a fixed critical point does not capture a positive mass of gradient flows.  We give two different arguments, one based on stable/unstable manifolds that works for $\alpha > 0$ and a second argument in the case of $\alpha = 0$. The second argument is more robust and can also be made to work in the case $\alpha > 0$ where it can be used to produce a result of independent interest which we postpone to Lemma \ref{lem:dist} in \S \ref{sec:byproduct} (since it relies on some other ideas that have yet to be introduced). 
Recall that
$$ \nabla E(x) = \alpha \sum_{j=1}^{n} \frac{x_j - x}{\|x_j - x\|^{\alpha+2}} \quad \mbox{and} \quad  \Delta E(x) = \alpha^2 \sum_{j=1}^{n} \frac{1}{\|x - x_j\|^{\alpha+2}} \geq 0.$$
Assume that $\nabla E(x^*) = 0$. Since $\alpha > 0$, we have $\Delta E(x^*) > 0$ and can perform a local Taylor expansion of the energy around $x^*$
 $$ E(x^* + y) = E(x^*) + \frac{1}{2} \left\langle y, ((D^2 E)(x^*))(y) \right\rangle + \mathcal{O}(\|y\|^3).$$
Since $\mbox{tr}((D^2 E)(x^*)) = \Delta E(x^*) > 0$, know that the symmetric $2 \times 2$ matrix $(D^2 E)(x^*)$ has at least one positive eigenvalue (since the sum of the two eigenvalues is positive). If it has two positive eigenvalues, then $E$ has a strict local minimum in $x^*$. This means that the gradient flow started at $\infty$ can never reach $x^*$: here, it is important we are always considering gradient \textit{ascent} which necessarily avoids local minima (gradient descent very well might end up in a local minimum). This means that the only relevant remaining case is when $((D^2 E)(x^*))$ has one positive and one non-positive eigenvalue. After a change of variables, we may assume without loss of generality (after a translation) that $x^* = 0$ and (after a rotation) that the eigenvectors of $((D^2 E)(x^*))$ are given by $(1,0)$ and $(0,1)$. Then
 $$ E(y) = E(0) + \frac{\lambda_1}{2} y_1^2 + \frac{\lambda_2}{2} y_2^2  + \mathcal{O}(\|y\|^3)$$
 where $\lambda_1 > 0$ and $\lambda_2 \leq 0$. Then
 $$ \nabla E(y) = ( \lambda_1 y_1,  \lambda_2 y_2) +  \mathcal{O}(\|y\|^2).$$
There is one stable and one unstable direction which means that there is exactly one gradient flow line that can end up in the critical point. In particular, if there are finitely many critical points, then this set of trajectories has measure 0.  The case $\alpha = 0$ is different (since the Laplacian is 0) and a different argument is needed; we will provide a more general statement (that also works for $\alpha \geq 0$) as a consequence of the main argument in the paper as Lemma \ref{lem:dist} in \S \ref{sec:byproduct}.


\subsection{A topological Lemma.} We establish a basic topological fact showing that the set of points at distance $\|x\| = r \gg n $ from the origin whose gradient flow transports them to a fixed existing particle cannot be too complicated.
\begin{lemma} \label{lem:topo}
Let $\left\{x_1, \dots, x_n\right\} \subset \mathbb{R}^2$ be a set of $n$ particles satisfying property $(P)$. Then, for $1 \leq i \leq n$ arbitrary and $r \geq 10n$, the set
$$ A_r = \left\{\|x\| = r: \mbox{gradient flow ends up touching}~x_i \right\}$$
is, up to a finite number of points, the union of at most 6 open intervals.
\end{lemma}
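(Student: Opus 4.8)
The plan is to exploit three facts: that the flow points strictly inward on every sufficiently large circle, that distinct gradient trajectories never cross, and that the resulting decomposition of the far field into basins of attraction is topologically rigid. First note that in a GFA cluster each particle is created at distance exactly $1$ from an existing one, so the particles are vertices of a connected graph whose $n-1$ spanning edges all have length $1$; hence $\mbox{diam}\{x_1,\dots,x_n\} < n$. Placing the origin at one of the particles, all particles lie in the disk $\|x\| < n$, so the circle $C_r = \{\|x\| = r\}$ with $r \ge 10n$ lies well outside the cluster.

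The key estimate is that the flow is strictly inward on $C_\rho$ for every $\rho \ge 10n$. Indeed, for $\|x\| = \rho \ge 10n$ we have $\|x_j\| < n < \rho$, so $\langle x_j, x\rangle \le \|x_j\|\,\rho < \rho^2$, and therefore
\[
\langle \nabla E(x), x\rangle = \alpha \sum_{j=1}^{n} \frac{\langle x_j - x, x\rangle}{\|x_j - x\|^{\alpha+2}} = \alpha \sum_{j=1}^{n} \frac{\langle x_j, x\rangle - \|x\|^2}{\|x_j - x\|^{\alpha+2}} < 0 ,
\]
with the identical conclusion in the logarithmic case $\alpha = 0$. Consequently $\tfrac{d}{dt}\|x(t)\|^2 = 2\langle \nabla E, x\rangle < 0$ along the ascent flow throughout $\{\|x\|\ge 10n\}$: the radius is strictly decreasing, every trajectory crosses each intermediate circle $C_\rho$, $10n \le \rho \le r$, exactly once, and no trajectory can return to a radius it has already left. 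Transversality to all these circles means that following the flow inward defines an orientation-preserving homeomorphism $C_r \to C_{10n}$ carrying trajectories to trajectories; it maps $A_r$ bijectively onto the corresponding set on $C_{10n}$, so the number of arcs of $A_r$ is independent of $r \ge 10n$ and it suffices to bound it on a single circle.

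Next I would show $A_r$ is, up to finitely many points, a finite union of open arcs. Away from the finitely many critical points (property (P), together with \S2.2, which guarantees that only finitely many trajectories run into a critical point) the field $\nabla E$ is smooth and solutions of $\dot x = \nabla E$ are unique, so trajectories cannot cross. The first-hit map $F\colon C_r \to \{x_1,\dots,x_n\}$, sending a starting point to the particle its trajectory first reaches, is therefore locally constant except across a finite set of exceptional starting points; hence $A_r = F^{-1}(x_i)$ is open with finite boundary. Each boundary point is exceptional in one of two ways: either its trajectory limits forward onto a saddle of $E$ (so it lies on the one-dimensional stable manifold, i.e. a separatrix, of that critical point), or it grazes one of the circles $\partial B(x_j,1)$ tangentially, separating ``hits $x_j$'' from ``passes by $x_j$''.

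The main obstacle is to bound the number of arcs by an absolute constant. My plan is to count these boundary trajectories by tracing them outward. By \S2.2 each saddle has a one-dimensional stable manifold, hence two separatrix branches, and by the radial monotonicity each branch, continued backward by gradient descent, leaves every large circle at most once; the grazing trajectories can be treated similarly. Thus each arc endpoint corresponds to a distinct simple curve escaping to infinity, and since these curves are pairwise disjoint they acquire a cyclic order on $C_r$, with consecutive ones bounding regions that each capture a distinct feature of the basin boundary. To convert this into an absolute bound I would analyze the flow near infinity directly: after translating the origin to the centroid the dipole term of the multipole expansion of $E$ vanishes, so the angular component of $\nabla E$ is controlled by the next (second-order) harmonic, which admits only boundedly many hyperbolic directions along which a separatrix can escape. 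The genuinely technical point --- and where the explicit constant $6$ is extracted --- is the bookkeeping that assigns each escaping boundary trajectory to one of these admissible asymptotic directions while tracking which side belongs to $x_i$; the monotonicity and non-crossing inputs above are robust and already force the qualitative conclusion that the number of arcs is finite and bounded independently of $n$.
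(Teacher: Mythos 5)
Your preliminary steps are correct and overlap with the paper's setup: the computation showing $\langle \nabla E(x), x\rangle < 0$ for $\|x\| \geq 10n$, the non-crossing of trajectories, and the reduction showing that the arc structure of $A_r$ is the same on every circle of radius $\geq 10n$. The gap is in the only step that carries the actual content of the lemma, namely the uniform constant $6$, and the route you propose for it cannot work. You want to count the boundary trajectories of $A_r$ by their behavior at infinity, arguing that after centering at the centroid the dipole term vanishes and the quadrupole term ``admits only boundedly many hyperbolic directions along which a separatrix can escape.'' Two objections. First, the endpoints of the arcs of $A_r$ lie on stable separatrices of \emph{interior} saddles (or on grazing trajectories), and the number of such curves is controlled only by the number of critical points of $E$; under property (P) this is finite, but it is not bounded by any absolute constant and can grow with $n$, so counting separatrices can never yield $6$. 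Second, in the region $\|x\| \geq 10n$ where these trajectories live, the monopole term of $\nabla E$ (of size $\sim \alpha n/r^{\alpha+1}$, radial) dominates all higher multipoles (of size $O(n^2/r^{\alpha+2})$); along any trajectory one gets $|d\theta/dr| = O(n/r^2)$, which is integrable, so every asymptotic angle at infinity is attained by exactly one trajectory and separatrices are in no way funneled into finitely many distinguished directions. The quadrupole could only impose ``hyperbolic directions'' on the escaping flow if the monopole canceled, which it does not. So no uniform bound on the number of arcs can be read off at infinity: the count is an interior quantity.

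The paper extracts the constant $6$ from the geometry near $x_i$, not near infinity, and this is the idea missing from your proposal. Since the particles are $1$-separated, at most $6$ of them can be at distance exactly $1$ from $x_i$ (the kissing number of disks in the plane), so the attachment circle around $x_i$ minus the finitely many contact points consists of at most $6$ arcs $C_{i,1}, C_{i,2}, \dots$. Now suppose $A_r$ had $7$ or more intervals; by pigeonhole two points $x, y$ in \emph{distinct} intervals have flows arriving at the \emph{same} arc $C_{i,k}$. The two flow lines from $x$ and $y$, the piece of $\{\|z\|=r\}$ between $x$ and $y$, and a piece of $C_{i,k}$ bound a Jordan domain $\Omega$, and every flow started between $x$ and $y$ is trapped in $\Omega$: it cannot exit through the outer circle (the field points inward there, your own estimate), it cannot cross the two bounding flow lines (uniqueness of trajectories), and it cannot come within distance $1$ of any \emph{other} particle, because the cluster is connected and the rest of it is walled off outside $\Omega$. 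Hence, up to the finitely many starting points whose flow terminates in a critical point, everything between $x$ and $y$ also attaches to $x_i$, so the two intervals merge into one --- contradiction. Note how this argument uses planarity (the Jordan curve theorem) and the connectivity of the aggregate, neither of which appears in your proposal; without some substitute for this trapping-and-merging step, your argument establishes only that the number of arcs is finite, not that it is at most $6$.
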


\begin{proof}
We start by considering the circle of radius $1/2$ centered at $x_i$. By construction of the set, it is touched by at least one other circle of radius $1/2$ centered
at some other point. It may be touched by more than one circle but it cannot be touched by more than 6 circles. Therefore the set
$$ \left\{\|x - x_i\| = 1/2 \right\} \setminus \bigcup_{k=1 \atop k \neq i} \left\{\|x-x_k\|=1/2\right\}$$
is almost all of $\left\{\|x - x_i\| = 1/2 \right\}$ with at most 6 points removed. In particular, it can be written as the union of at most 6 connected sets which we label $C_{i,1}, C_{i,2}, \dots$.

\begin{center}
\begin{figure}[h!]
\begin{tikzpicture}[scale=0.7]
\filldraw (0,0) circle (0.05cm);
\node at (0, -0.3) {\Large $x_i$};
\draw[thick] (0,0) circle (1cm);
\filldraw (-2,0) circle (0.05cm);
\draw[thick] (-2,0) circle (1cm);
\draw[thick] (1.41,1.41) circle (1cm);
\filldraw (1.41,1.41) circle (0.05cm);
\draw [ultra thick,domain=180:360] plot ({cos(\x)}, {sin(\x)});
\draw [ultra thick,domain=0:45] plot ({cos(\x)}, {sin(\x)});
\node at (1.3, -1.2) {$C_{i,1}$};
\filldraw (6,-1) circle (0.05cm);
\node at (6.3, -1) {\Large $x$};
\filldraw (6,1) circle (0.05cm);
\node at (6.3, 1) {\Large $y$};
\draw [dashed] (6,-1) -- (6,1);
\draw [thick] (6,1) to[out=180, in=45] (4,0.5) to[out=225, in=350] (0.95, -0.2);
\draw [thick] (6,-1) to[out=180, in=45] (4,-0.5) to[out=225, in=350] (0.7, -0.72);
\node at (5,0) {$\Omega$};
\end{tikzpicture}
\caption{Creating a trapping region $\Omega$.}
\label{fig:8}
\end{figure}
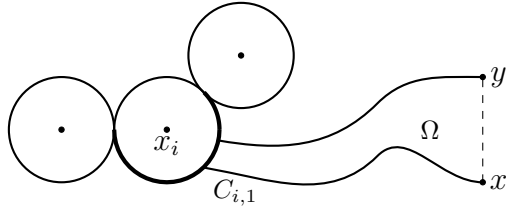
\end{center}

 Let us now assume that
$A_r$ contains at least 7 open intervals (separated by more than just a finite number of points). Then, using the pigeonhole principle, there are two points $x,y \in A_r$ that belong to two different open intervals in $A_r$ which, under the gradient flow, get sent to the same connected set $C_{i,k}$. The main idea of the argument is now shown in Fig. \ref{fig:8}. Starting the gradient flow for any point between $x$ and $y$ now leads a flow that is trapped between the two existing gradient flows in the region $\Omega$. This can be seen as follows: first, because $r \geq 10n$ and thus all the forces $\nabla E$ point inside the disk, it is clear that the gradient flow can never escape through $A_r$. It can never touch the gradient lines induced by $x$ and $y$ by uniqueness. It can also not get within distance $1$ of any of the existing points because the existing particles are connected. It may get stuck in a critical point but, by assumption, there are only finitely many of those which then shows that, in the set $A_r$, the open interval containing $x$ and the open interval containing $y$ may actually be joined into a single open interval (up to finitely many points).
\end{proof}

\subsection{Proof of Theorem 1}

\begin{proof} We will prove a slightly stronger statement: for any point $x_i$ and any interval $J \subset \left\{y: \|x_i -y\| = 1\right\}$, there exists an asymptotic likelihood of ending up in that interval (in the sense of the limit of the measure existing when $r \rightarrow \infty$). Thus, by considering random points at distance $\|x\| = r$, there is an emerging limit as $r \rightarrow \infty$ of where these points impact on the boundary. We can, without loss of generality, assume that $J$ is an interval on the disk with the property that no element in the interval is distance $1/2$ from any other point (if not, we can always split the interval into two intervals and argue for each one separately). 

\begin{center}
\begin{figure}[h!]
\begin{tikzpicture}
\filldraw (0,0) circle (0.05cm);
\node at (0,0) {\includegraphics[width=0.13\textwidth]{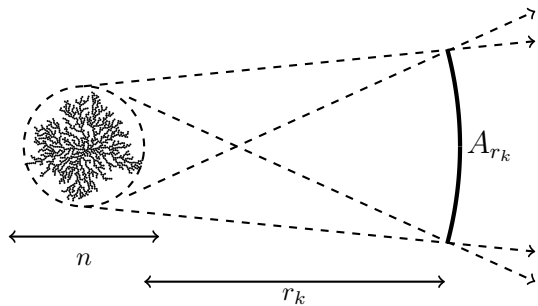}};
\draw[thick, <->] (-1,-1.2) -- (1,-1.2);
\node at (0, -1.5) {$n$};
\draw[thick, <->] (0.8,-1.8) -- (4.8,-1.8);
\node at (2.8, -2) {$r_k$};
\draw[dashed, thick] (0,0) circle (0.8cm);
\draw [ultra thick,domain=345:359.99] plot ({5*cos(\x)}, {5*sin(\x)});
 \draw [ultra thick,domain=0:15] plot ({5*cos(\x)}, {5*sin(\x)});
 \node at (5.4, 0) {\Large $A_{r_k}$};
 \draw[dashed, thick, ->] (0.6, -0.65) -- (6, 1.8);
  \draw[dashed, thick, ->] (0.6, 0.65) -- (6, -1.8);
   \draw[dashed, thick, ->] (0, 0.8) -- (6, 1.4);
      \draw[dashed, thick, ->] (0, -0.8) -- (6, -1.4);
\end{tikzpicture}
\caption{Sketch of the proof of Theorem 1.}
\label{fig:9}
\end{figure}
\end{center}
Repeating the argument from Lemma \ref{lem:topo} we see that the set $A_r$, which is now the set of points on $\left\{x: \|x\| = r\right\}$ that end up flowing into $J$, are a connected set (up to at most finitely many points that are transported to critical points, here we use property $(P)$). There are two cases:
$$ \mbox{either} \quad \lim_{r \rightarrow \infty} \frac{|A_r|}{2\pi r} = 0 \qquad \mbox{or not}.$$
If the limit is 0, we have the desired convergence (the likelihood of hitting the interval $J$ is then 0). Suppose now that the limit is not 0. Then there exists $\varepsilon_0$ such that for a sequence $(r_k)_{k=1}^{\infty}$ tending to infinity $|A_{r_k}| \geq \varepsilon_0 r_k$. We pick such a very large radius $r_k$, say one that is so large that it ensures that $|A_{r_k}| \geq e^n$, and then argue as illustrated in Figure \ref{fig:9}. We don't exactly know how the existing $n$ particles are distributed inside the original disk of radius $\leq n$ but they all exert a radial force. 
This shows that, at the endpoints of $A_{r_k}$, we can actually predict up to an angle of size $\sim n/r_k$, how the flow lines will look like (by assuming, simultaneously, that all the existing points are concentrated in the northern and the southern tip of the disk). This means that the asymptotic probability is actually determined up to a factor of $\sim n/r_k$. This can be made arbitrarily small by letting $r \rightarrow \infty$ which forces convergence of $|A_r|/r$.
\end{proof}

\section{Proof of Theorem 2}
The main ingredient is Lemma \ref{lem:main} in \S 3.1.  \S 3.2 proves a geometric result, Lemma \ref{lem:ang}, which then, together with Lemma \ref{lem:main} proves Theorem 2 in \S 3.3. A different look at Lemma \ref{lem:main} leads to Lemma \ref{lem:dist} in \S 3.4.

\subsection{The Main Step of the Argument} 
\begin{lemma} \label{lem:main} Let $x_i$ be arbitrary and define, for $r \geq 10n$,
$$ A_r = \left\{x \in \mathbb{R}^2: \|x\| = r ~ \mbox{and gradient flow leads to}~x_i \right\}.$$
Then, for $c_{\alpha}>0$ depending only on $\alpha > 0$,
$$ \frac{|A_{r}|}{2 \pi r} \leq c_{\alpha} r^{\alpha}  n^{-\frac{\alpha}{2} - \frac{1}{2}}.$$
\end{lemma}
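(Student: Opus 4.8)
The plan is to run a flux (divergence theorem) argument on the ``tube'' of trajectories that terminate at $x_i$. The key structural fact is that $E$ is smooth away from the particles and, for $\alpha > 0$, has nonnegative divergence $\Delta E = \alpha^2 \sum_j \|x - x_j\|^{-\alpha-2} \ge 0$. This sign is exactly what is needed: it lets me compare the inward flux of $\nabla E$ across $A_r$ with the outward flux across a small ``gate'' near $x_i$, and thereby convert a pointwise lower bound on $|\nabla E \cdot \hat r|$ on $A_r$ into an upper bound on $|A_r|$.

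First I would set up the tube. By Lemma \ref{lem:topo}, $A_r$ is, up to finitely many points, a union of at most six arcs; I fix one such arc and let $T$ be the open region swept out by the gradient flow trajectories issuing from it before they first reach distance $1$ from a particle. Its boundary $\partial T$ decomposes into the outer piece (part of $A_r$), the two extremal trajectories bounding the tube, and an inner ``gate'' lying on $\{\|x-x_i\|=1\}$ where the trajectories terminate. Two features are crucial. By the stopping rule every point of every trajectory in $T$ is at distance $>1$ from all particles, so $T$ contains no singularity of $E$ and $\Delta E$ is smooth and integrable on $T$. And along the two side trajectories the tangent is $\nabla E$, so the outward normal is orthogonal to $\nabla E$ and these pieces carry zero flux. (The finitely many trajectories captured by critical points are excluded using property $(P)$ and the discussion of \S 2.2; they form a measure-zero set and do not affect the boundary integrals.) Applying the divergence theorem on $T$ and using $\Delta E \ge 0$ gives, after summing over the at most six arcs,
$$ \int_{A_r} |\nabla E \cdot \hat r|\, d\ell \;\le\; \int_{\mathrm{gate}} \nabla E \cdot n \, d\ell \;\le\; \int_{\mathrm{gate}} \|\nabla E\|\, d\ell, $$
where the gate is the exposed part of $\{\|x-x_i\|=1\}$, an arc of length $\le 2\pi$.

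Two elementary estimates then finish the argument. On $A_r$, since $r \ge 10n$ and all particles lie within distance $n$ of the origin, every summand of $\nabla E \cdot \hat r$ has a definite inward sign and size comparable to $r^{-(\alpha+1)}$, giving the uniform lower bound $|\nabla E \cdot \hat r| \ge c_\alpha\, n\, r^{-(\alpha+1)}$ and hence $\int_{A_r}|\nabla E\cdot\hat r| \ge c_\alpha n r^{-(\alpha+1)} |A_r|$. On the gate, each point $x$ satisfies $\|x-x_j\| \ge 1$ for all $j$, and since the centers are $1$-separated a dyadic packing count (at most $\lesssim R^2$ particles, and never more than $n$, within distance $R$) yields $\|\nabla E\| \le \alpha \sum_j \|x-x_j\|^{-(\alpha+1)} \lesssim n^{(1-\alpha)/2}$ for $0 < \alpha \le 1$; so the gate integral is $\lesssim n^{(1-\alpha)/2}$. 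Combining, $c_\alpha n r^{-(\alpha+1)}|A_r| \lesssim n^{(1-\alpha)/2}$, i.e. $|A_r| \lesssim r^{\alpha+1} n^{-(\alpha+1)/2}$, which is precisely $|A_r|/(2\pi r) \le c_\alpha\, r^{\alpha}\, n^{-\alpha/2 - 1/2}$.

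The main obstacle I anticipate is not the two quantitative inputs above, which are routine, but the rigorous set-up of the tube $T$: one must verify that the swept region is bounded by genuine flow lines (so the side flux vanishes), that it contains no particle singularity, that no positive-measure family of trajectories is trapped at critical points, and that $\partial T$ decomposes as claimed. This is exactly where Lemma \ref{lem:topo} and property $(P)$ enter. It is also worth noting that the packing sum balances at scale $\sqrt n$, which is the arithmetic origin of the exponent $(1-\alpha)/2$ and hence of the final rate $n^{-\alpha/2-1/2}$; this is why the argument produces the stated bound sharply for $0 < \alpha \le 1$.
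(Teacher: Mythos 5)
Your proposal is correct and is essentially the paper's own argument: the same divergence-theorem computation on the region swept out by the flow lines, with $\Delta E \ge 0$, zero flux across the bounding trajectories, the $1$-separation packing bound $\sum_j \|x-x_j\|^{-(\alpha+1)} \lesssim n^{(1-\alpha)/2}$ on the inner boundary, and the far-field radial estimate $|\nabla E \cdot \hat r| \gtrsim n r^{-(\alpha+1)}$ on $A_r$. The only cosmetic difference is that you run the argument arc-by-arc using Lemma \ref{lem:topo}, whereas the paper applies Green's theorem to the full region $\Omega_r$ at once; the estimates and the resulting exponent are identical.
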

\begin{proof} 
We define the set $\Omega_r \subset \mathbb{R}^2$ as all the points in $\mathbb{R}^2$ that are element of some gradient flow started by some point in $A_i$ (and ending up within distance $1$ from $x_i$). Green's Theorem implies
$$ \int_{\Omega_r} \Delta E ~dx = \int_{\partial \Omega_r} \nabla E \cdot dn,$$
where $n$ is the normal vector pointing outside. 

\begin{center}
\begin{figure}[h!]
\begin{tikzpicture}
\filldraw (0,0) circle (0.05cm);
\node at (0, -0.3) {\Large $x_i$};
\draw[dashed, thick] (0,0) circle (1cm);
\draw [ultra thick,domain=350:359.99] plot ({5*cos(\x)}, {5*sin(\x)});
 \draw [ultra thick,domain=0:10] plot ({5*cos(\x)}, {5*sin(\x)});
 \draw [ultra thick,domain=350:359.99] plot ({cos(\x)}, {sin(\x)});
 \draw [ultra thick,domain=0:10] plot ({cos(\x)}, {sin(\x)});
 \node at (5.4, 0) {\Large $A_r$};
 \draw [thick] (1, 0.2) -- (4.95, 0.85);
  \draw [thick] (1, -0.2) -- (4.95, -0.85);
 \node at (3, 0) {\Large $\Omega_r$};
 \draw[thick, <->] (-1,-1.2) -- (1,-1.2);
\node at (0, -1.5) {$1$};
 \draw[thick, <->] (-1,-1.7) -- (2,-1.7);
\node at (0.5, -2) {$n$};
\draw[thick, <->] (2.5,-1.5) -- (4.8,-1.5);
\node at (3.8, -1.8) {$r \geq 10n$};
\end{tikzpicture}
\caption{Sketch of the proof.}
\label{fig:10}
\end{figure}
\end{center}

The gradient and the Laplacian of the energy are given by
$$ \nabla E(x) = \alpha \sum_{j=1}^{n} \frac{x_j - x}{\|x_j - x\|^{\alpha+2}} \quad \mbox{and} \quad  \Delta E(x) = \alpha^2 \sum_{j=1}^{n} \frac{1}{\|x - x_j\|^{\alpha+2}} \geq 0.$$
This implies
$$ \int_{\partial \Omega_r} \nabla E \cdot dn \geq 0.$$
The remainder of the argument is dedicated to deducing information from this inequality.
We decompose the boundary of $\Omega_r$ into three sets
$$ \partial \Omega_r = (\partial \Omega_r \cap \left\{z: \|z-x_i\| = 1\right\}) \cup A_r\cup  \mbox{remainder},$$
where $\partial \Omega_r \cap \left\{z: \|z-x_i\| = 1\right\}$ is the part of the boundary of $\Omega_r$ that is within distance 1 of $x_i$, the set $A_r$ is the initial starting points of the gradient flow that are far away and the remainder is everything else (comprised of a union of gradient flows). We note that
$$  \int_{\mbox{\tiny remainder}} \nabla E \cdot dn = 0$$
because the remainder is given by gradient flows flowing in direction $\nabla E$: the normal derivative vanishes everywhere.
We arrive at
$$  \int_{\partial \Omega_r \cap \left\{z: \|z-x_i\| = 1\right\}} \nabla E \cdot dn \geq -   \int_{A_r} \nabla E \cdot dn.$$
We will now bound first the left-hand side from above and then the right-hand side from below.
On the part of the boundary that is distance 1 to $x_i$, the normal vector can be written as
$(x_i - x)$ and thus
$$  \int_{\partial \Omega_r \cap \left\{z: \|z-x_i\| = 1\right\}} \nabla E \cdot dn = \int_{\partial \Omega_r \cap \left\{z: \|z-x_i\| = 1\right\}} \left\langle  \alpha \sum_{j=1}^{n} \frac{x_j - x}{\|x_j - x\|^{\alpha+2}}, x_i  - x \right\rangle d\sigma,$$
where $\sigma$ is the arclength measure. We conclude
$$  \left\langle \sum_{j=1}^{n} \frac{x_j - x}{\|x_j - x\|^{\alpha+2}}, x_i  - x \right\rangle =   \sum_{j=1}^{n} \frac{  \left\langle   x_j - x, x_i - x \right\rangle}{\|x_j - x\|^{\alpha+2}} \leq   \sum_{j=1}^{n} \frac{1}{\|x_j - x\|^{\alpha+1}},$$
where the last step used Cauchy-Schwarz and the fact that $\|x_i - x\|=1$. Thus
\begin{align*}
 \left|   \int_{\partial \Omega_r \cap \left\{z: \|z-x_i\| = 1\right\}} \nabla E \cdot dn  \right|  &\leq  \alpha \int_{\partial \Omega_r \cap \left\{z: \|z-x_i\| = 1\right\}}    \sum_{j=1}^{n} \frac{1}{\|x_j - x\|^{\alpha+1}} d\sigma\\
 &\leq 2 \pi \alpha \max_{\partial \Omega_r \cap \left\{z: \|z-x_i\| = 1\right\}} \sum_{j=1}^{n} \frac{1}{\|x_j - x\|^{\alpha+1}}.
 \end{align*}
 At this point, we use a packing argument. The existing points $\left\{x_1, \dots, x_n\right\}$ are $1-$separated, meaning $\|x_i - x_j\| \geq 1$ when $i \neq j$. Likewise, the new point $x$ is also distance $\geq 1$ from all the existing points. The sum is therefore maximized if all the existing points $x_i$ are packed as tightly as possible around $x$. Switching to polar coordinates around $x$, we have that for some absolute constants,
$$ \max_{\partial \Omega_r \cap \left\{z: \|z-x_i\| = 1\right\}}  \sum_{j=1}^{n} \frac{1}{\|x_j - x\|^{\alpha+1}} \leq c^*_{\alpha} \sum_{\ell=1}^{\sqrt{n}} \ell \frac{1}{\ell^{\alpha+1}} \leq c_{\alpha} \cdot n^{\frac{1}{2}-\frac{\alpha}{2}}.$$
One could make the constant $c_{\alpha}$ explicit, indeed, since the sphere-packing problem is solved in two dimensions, it stands to reason that the optimal constant should be attained for the hexagonal lattice (a result in a similar spirit has been shown in \cite{grabner}); we have not tracked explicit constants anywhere and will not start here, however, this may be a useful avenue to pursue in regards to whether the asymptotic shape is a disk when $\alpha = 0$.
It remains to bound the second integral from below. It can be written as
$$-   \int_{A_r} \nabla E \cdot dn =  \alpha \int_{A_r} \left\langle   \sum_{j=1}^{n} \frac{x-x_j}{\|x_j - x\|^{\alpha+2}}, \frac{ x}{\|x\|} \right\rangle d\sigma.$$
Since $\|x\| \geq 10n$ and $\|x_i\| \leq n$ and thus $\|x-x_j\| \geq 9n$, we have
$$ \left\langle x-x_j, x\right\rangle = \|x-x_j\|^2 + \left\langle x-x_j, x_j\right\rangle \geq \frac{1}{2} \|x-x_j\|^2$$
and thus
$$ -   \int_{A_r} \nabla E \cdot dn  \geq \frac{\alpha}{100} \int_{A_r} \sum_{j=1}^{n} \frac{1}{\|x_j - x\|^{\alpha+1}} d\sigma.$$
Using again $r \geq 10n$ together with $\|x- x_i\| \geq r/2$, we have
$$  \frac{\alpha}{100} \int_{A_r} \sum_{j=1}^{n} \frac{1}{\|x_j - x\|^{\alpha+1}} d\sigma \geq c_{\alpha}^* \frac{n}{r^{\alpha+1}}|A_r|.$$
Combining the upper bound and the lower bound, we obtain
$$ \frac{|A_{r}|}{2 \pi r} \leq c_{\alpha} r^{\alpha}   n^{-\frac{\alpha}{2} - \frac{1}{2}}.$$
\end{proof}

\subsection{Bounding the likelihood at infinity.}
In order to conclude the result, we need to show that the size of $A_{r}$ for finite $r$ is related to the probability of selecting the particle (which is governed by the asymptotic behavior of $|A_r|/r$ as $r \rightarrow \infty$). 

\begin{lemma} \label{lem:ang}
Suppose $\left\{x_1, \dots, x_n\right\} \subset \mathbb{R}^2$ satisfies property (P). Then the asymptotic likelihood of hitting particle $x_i$ satisfies
$$  p = \lim_{r \rightarrow \infty} \frac{|A_r|}{2 \pi r} \leq  100 \cdot \frac{\left| A_{ \frac{100n}{p}} \right|}{ \frac{100n}{p}}.$$
\end{lemma}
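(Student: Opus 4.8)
The plan is to show that the angular measure of $A_r$ barely changes between the finite radius $R = 100n/p$ and infinity, so that its value at $R$ is comparable to the limiting value $2\pi p$. Write $\theta(r) = |A_r|/r$ for the total angular measure of $A_r$, so that $\theta(r) \to 2\pi p$ as $r \to \infty$ by Theorem 1. The entire argument rests on a single geometric observation: for $\|x\| = \rho \geq 10n$, every gradient flow line crosses the circle of radius $\rho$ almost radially, with an angular defect of order $n/\rho$.

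First I would establish this cone estimate. For $\|x\| = \rho \geq 10n$, the vector $x_j - x$ has radial part (along $-x/\|x\|$) at least $\rho - n \geq 9\rho/10$ and transverse part at most $\|x_j\| \leq n$, so it makes an angle at most $\arctan(10n/9\rho) \leq 2n/\rho$ with the inward radial direction. Since
$$\nabla E(x) = \alpha \sum_{j=1}^n \frac{x_j - x}{\|x_j - x\|^{\alpha+2}}$$
is a \emph{positive} combination of these vectors, and a cone of half-angle below $\pi/2$ about a fixed direction is closed under addition (if $\langle v, w\rangle \geq \cos\phi_0 \|v\|$ and likewise for $v'$, then $\langle v+v', w\rangle \geq \cos\phi_0 \|v+v'\|$), the vector $\nabla E(x)$ itself makes an angle $\phi(\rho) \leq 2n/\rho$ with the inward radial direction; in particular its radial component is strictly negative for all $\rho > n$.

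Two consequences follow. Because the radial component of $\nabla E$ is negative for $\rho > n$, the radius $\rho$ strictly decreases along each forward gradient-ascent trajectory inside $\{\rho \geq 10n\}$; running time backwards therefore defines, for $10n \leq r < r'$, an order-preserving homeomorphism of circles (off the finitely many trajectories that terminate at critical points, discarded via property (P) exactly as in Lemma \ref{lem:topo}) that carries $A_r$ bijectively onto $A_{r'}$, arcs to arcs and endpoints to endpoints. Second, along any trajectory $|d\theta/d\rho| = \tan\phi(\rho)/\rho \leq 3n/\rho^2$, so the angular coordinate of a boundary point drifts by at most $\int_r^\infty 3n\rho^{-2}\,d\rho = 3n/r$ as the trajectory is continued out to infinity.

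Combining these, $\theta(r)$ is the sum over the at most $6$ arcs of $A_r$ (Lemma \ref{lem:topo}) of the angular gaps between consecutive endpoints, and each of the at most $12$ endpoints moves by at most $3n/r$ in passing to the limit, whence $|2\pi p - \theta(r)| \leq 12 \cdot 3n/r = 36 n/r$. Taking $r = R = 100n/p$ (which satisfies $R \geq 100n \geq 10n$ since $p \leq 1$) makes the right-hand side $36 p/100 < p/2$, so $\theta(R) \geq 2\pi p - p/2 \geq p/100$, which is precisely $p \leq 100\,|A_R|/R$. The main obstacle is not the size of the constants (the choice of $100$ leaves enormous slack, and in fact one obtains $p \lesssim |A_R|/R$) but the rigorous bookkeeping for the flow map: one must confirm it is genuinely a homeomorphism carrying arcs to arcs, combining monotonicity of $\rho$, uniqueness of trajectories, and property (P) to excise the critical-point trajectories, so that the change in total angular measure is controlled purely by the drift of the finitely many arc endpoints.
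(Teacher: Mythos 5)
Your proof is correct, and it rests on the same geometric input as the paper's: beyond radius $10n$ the field $\nabla E$ is a positive combination of vectors pointing into the disk of radius $n$ around the origin, hence deviates from the inward radial direction by an angle $O(n/\rho)$. The bookkeeping, however, is genuinely different. The paper works \emph{inward} from a very large radius: it takes one arc of $A_r$ (of angular size roughly $2\pi p$), follows the two gradient-flow lines emanating from its endpoints, traps them between the lines $y = px-n$ and $y=-px+n$, and invokes the trapping argument of Lemma \ref{lem:topo} to conclude that the whole cross-section of the trapped region at radius $100n/p$, of length at least $10n$, lies in $A_{100n/p}$. You instead work \emph{outward}: you build the circle-to-circle flow map as a homeomorphism carrying $A_r$ onto $A_{r'}$ arcs-to-arcs, and integrate the angular drift $|d\theta/d\rho| \leq 3n/\rho^2$ to control the motion of the at most $12$ arc endpoints, arriving at the two-sided stability estimate $\left| \,|A_r|/r - 2\pi p \right| \leq 36n/r$ for every $r \geq 10n$. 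Your version is tighter in its logic (no ``approximated by lines'' step) and strictly stronger in its conclusion: it is two-sided and quantitative in $r$, in the spirit of the display $(\diamond)$ that the paper speculates about after the proof of Theorem 2 --- though it does not prove $(\diamond)$, since your error term $36n/r$ is additive and only drops below $p$ once $r \gtrsim n/p$, whereas $(\diamond)$ would require control already at $r = 10n$. The paper's version is shorter and needs only the two boundary trajectories of a single arc. One simplification available to you: the caveat about trajectories terminating at critical points inside the annulus is vacuous, because the radial component of $\nabla E$ is strictly negative for $\rho > n$ (as you yourself show), so there are no critical points there at all; property (P) enters only through Theorem 1 (existence of the limit $p$) and Lemma \ref{lem:topo} (the bound of six arcs).
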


\begin{proof} 
We may think of each individual existing particle as exerting a force and that these forces, once
one is far away from the set, are contained in a convex cone. We refer to Fig. \ref{fig:11} for a sketch
of the argument. It remains to compute the opening angle of the cone.

\begin{center}
\begin{figure}[h!]
\begin{tikzpicture}
\filldraw (0,0) circle (0.05cm);
\node at (0,0) {\includegraphics[width=0.13\textwidth]{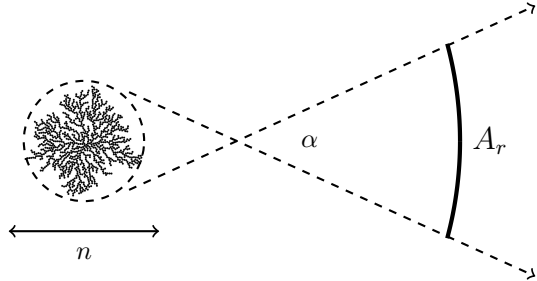}};
\draw[thick, <->] (-1,-1.2) -- (1,-1.2);
\node at (0, -1.5) {$n$};
\draw[dashed, thick] (0,0) circle (0.8cm);
\draw [ultra thick,domain=345:359.99] plot ({5*cos(\x)}, {5*sin(\x)});
 \draw [ultra thick,domain=0:15] plot ({5*cos(\x)}, {5*sin(\x)});
 \node at (5.4, 0) {\Large $A_r$};
 \draw[dashed, thick, ->] (0.6, -0.65) -- (6, 1.8);
  \draw[dashed, thick, ->] (0.6, 0.65) -- (6, -1.8);
  \node at (3, 0) {$\alpha$};
\end{tikzpicture}
\caption{Sketch of the proof of Lemma \ref{lem:ang}. Connected components flowing into $A_r$ have to be contained inside the cone.}
\label{fig:11}
\end{figure}
\end{center}

We assume, for convenience, the disk of radius $n$ to be centered in the origin and, again for convenience, that one (of possibly several but at most 6, see Lemma \ref{lem:topo}) intervals is centered symmetrically
around the $x-$axis. We choose $r$ to be large. The length of the set $A_r$ is then, for $r$ very large, asymptotically given by $2 \pi p r$, where $0 \leq p \leq 1$ is the asymptotic probability. We choose $r$ so large that $2 \pi p r \gg e^n \geq n$.
 The two lines in Fig. \ref{fig:11} can be approximated by $y= px - n$ and $y= -px + n$. These lines meet at $x = n/p$. Plugging in $x = 100n/p$ shows that
$$ |A_{100 \frac{n}{p}}| \geq 10 n = 100 \frac{n}{p} \frac{p}{10} \qquad \mbox{and therefore} \qquad \frac{ |A_{100 \frac{n}{p}}|}{100 \frac{n}{p}} \geq \frac{p}{10}.$$
 
\end{proof}

\subsection{Proof of Theorem 2}
\begin{proof} With all these ingredients in place, the proof of Theorem 2 is now straight-forward.
Using Lemma \ref{lem:main} and Lemma \ref{lem:ang}, we have
$$  p = \lim_{r \rightarrow \infty} \frac{|A_r|}{2 \pi r} \leq  100 \cdot \frac{\left| A_{ \frac{100n}{p}} \right|}{ \frac{100n}{p}} \leq c_{\alpha} \left( \frac{n}{p} \right)^{\alpha} n^{-\frac{\alpha}{2} - \frac{1}{2}}$$
from which we deduce $ p^{1+\alpha} \leq c\cdot  n^{\frac{\alpha-1}{2}}$ and thus $p \leq c  \cdot n^{\frac{\alpha-1}{2\alpha+2}}$.
\end{proof}
It seems likely that there is at least some room for improvement here. Lemma \ref{lem:ang} uses only basic convexity properties of the forces that are being exerted by the existing particles. It is not inconceivable that something like
$$(\diamond) \qquad \qquad  c_{\alpha, 1}\frac{|A_{10n}|}{10n} \leq \lim_{r \rightarrow \infty} \frac{|A_r|}{r} \leq c_{\alpha, 2}  \frac{|A_{10n}|}{10n} \qquad \mbox{might be true.}$$
One motivation is that if $\mu$ is a probability measure in the unit disk $D(0,1)$, then
$$ \nabla_x \int_{D(0,1)} \frac{d\mu(y)}{\|x-y\|^{\alpha}} dy \quad \mbox{for}~\|x\| \geq 10$$
 should have a greater degree of regularity than is exploited by the simple geometric argument in Lemma \ref{lem:ang}. It appears that multipole expansions might be a natural avenue to pursue in this direction. If $(\diamond)$ were the case, we would obtain the Beurling estimate $\mathbb{P} \leq  c_{\alpha} n^{\alpha/2 - 1/2}$ which is a slight improvement over Theorem 2. However, there is still no reason to assume that this would be optimal (except when $\alpha = 0$ where Theorem 2 is already optimal). Theorem 3 would then lead to a slight improvement on the growth rate and show that $\mbox{diam} \leq c_{\alpha} n^{\alpha/2 + 1/2}$. There is also no reason to assume that this improved rate would be optimal. None of this works for $\alpha \geq 1$, it seems that truly new ideas are required to deal with this regime.

\subsection{A useful byproduct} \label{sec:byproduct}
We use this section to quickly point out an interesting byproduct of the previous arguments (which can also be used to give an alternative approach to the argument from Section \S \ref{sec:grad}). 
\begin{lemma} \label{lem:dist}
Let $\alpha \geq 0$ and suppose $\left\{x_1, \dots, x_n\right\} \subset \mathbb{R}^2$ satisfies property (P). Let $y$ be a point that is at least distance 1 from all the existing points, $\|y - x_i\| \geq 1$. Then, for $\varepsilon \leq 1/10$, the likelihood of hitting the disk $B_{\varepsilon}(y)$ satisfies
$$ \mathbb{P}\left( \emph{new incoming particle hits}~B_{\varepsilon}(y) \right) \leq  c_{\alpha} \cdot \varepsilon^{\frac{1}{1 + \alpha}}  \cdot n^{\frac{\alpha-1}{2\alpha+2}}.$$
\end{lemma}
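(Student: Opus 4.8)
The plan is to mimic the argument of Lemma \ref{lem:main} almost verbatim, but to replace the role of the radius-$1$ sphere $\{\|z-x_i\|=1\}$ by the much smaller sphere $\{\|z-y\|=\varepsilon\}$ bounding the target disk $B_\varepsilon(y)$. Let $A_r = \{x : \|x\|=r \text{ and gradient flow leads into } B_\varepsilon(y)\}$, and let $\Omega_r$ be the union of all gradient flows started in $A_r$ and ending in $B_\varepsilon(y)$. As before, $\Delta E \geq 0$ gives $\int_{\partial\Omega_r} \nabla E\cdot dn \geq 0$, the ``remainder'' (flow-line) portion of $\partial\Omega_r$ contributes nothing since $\nabla E$ is tangent there, and we are left comparing the flux through the inner boundary piece $\partial\Omega_r \cap \{\|z-y\|=\varepsilon\}$ against the flux through the outer piece $A_r$. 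The outer flux is bounded below exactly as in Lemma \ref{lem:main}, producing a term $\gtrsim (n/r^{\alpha+1})|A_r|$.

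The new ingredient is the upper bound on the inner flux. On $\{\|z-y\|=\varepsilon\}$ the outward normal is $(y-x)/\varepsilon$, so
$$ \left| \int_{\partial\Omega_r \cap \{\|z-y\|=\varepsilon\}} \nabla E\cdot dn \right| \leq \alpha \int_{\partial\Omega_r \cap \{\|z-y\|=\varepsilon\}} \sum_{j=1}^n \frac{\bigl|\langle x_j - x,\, y-x\rangle\bigr|}{\varepsilon\,\|x_j-x\|^{\alpha+2}}\, d\sigma \leq \alpha \int \sum_{j=1}^n \frac{1}{\|x_j-x\|^{\alpha+1}}\, d\sigma,$$
using Cauchy--Schwarz and $\|y-x\|=\varepsilon$. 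Here the total arclength of the domain is at most $2\pi\varepsilon$ rather than $2\pi$, which is precisely where the $\varepsilon$-gain enters. The same $1$-separated packing argument used in Lemma \ref{lem:main} bounds the integrand by $c_\alpha n^{(1-\alpha)/2}$ uniformly over $x$, so the inner flux is $\leq c_\alpha\, \varepsilon\, n^{(1-\alpha)/2}$. Combining upper and lower bounds yields
$$ \frac{|A_r|}{2\pi r} \leq c_\alpha\, \varepsilon\, r^{\alpha}\, n^{-\frac{\alpha}{2}-\frac{1}{2}},$$
the analogue of Lemma \ref{lem:main} with an extra factor $\varepsilon$.

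Finally I would convert this finite-$r$ bound into the asymptotic hitting probability $p = \lim_{r\to\infty}|A_r|/(2\pi r)$ by invoking Lemma \ref{lem:ang}. That lemma's proof rests only on the convex-cone structure of the combined force field far from the cluster and is insensitive to the size of the target, so it applies here with the same conclusion $p \leq 100\,|A_{100n/p}|/(100n/p)$. Substituting $r = 100n/p$ into the displayed bound gives $p \leq c_\alpha\, \varepsilon\, (n/p)^{\alpha}\, n^{-\alpha/2-1/2}$, hence $p^{1+\alpha} \leq c_\alpha\, \varepsilon\, n^{(\alpha-1)/2}$ and
$$ p \leq c_\alpha\, \varepsilon^{\frac{1}{1+\alpha}}\, n^{\frac{\alpha-1}{2\alpha+2}},$$
as claimed. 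The main obstacle I anticipate is the bookkeeping at the inner boundary: one must check that $\partial\Omega_r \cap \{\|z-y\|=\varepsilon\}$ really has arclength $O(\varepsilon)$ (it is contained in the full $\varepsilon$-circle, so this is fine) and, more delicately, that the topological/trapping structure underlying Lemma \ref{lem:topo} and Lemma \ref{lem:ang} survives when the target is a small free-floating disk rather than a particle of the cluster. Since $y$ is only assumed $1$-separated from the existing points and $\varepsilon \leq 1/10$, the target sphere is disjoint from all cluster particles, so the connectedness and cone arguments go through unchanged; verifying this compatibility carefully is the one place where the argument requires genuine attention rather than routine transcription.
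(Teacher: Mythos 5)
Your proposal follows essentially the same route as the paper's own proof: it reruns the divergence-theorem argument of Lemma \ref{lem:main} with the inner boundary $\{\|z-x_i\|=1\}$ replaced by $\{\|z-y\|=\varepsilon\}$, gains the factor $\varepsilon$ from the shorter arclength of the inner boundary, and then closes via Lemma \ref{lem:ang} exactly as the paper does, arriving at the identical chain $p^{1+\alpha} \leq c_{\alpha}\,\varepsilon\, n^{(\alpha-1)/2}$. The compatibility issues you flag at the end (connectedness via Lemma \ref{lem:topo} and the cone argument for a free-floating target) are precisely the points the paper also invokes, so your write-up is a faithful, slightly more detailed version of the paper's argument.
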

\begin{proof}
Lemma \ref{lem:topo} implies that the set $A_r \subset \left\{x: \|x\| = r \right\}$ with the property that the gradient flow ends up hitting $B_{\varepsilon}(y)$ before getting within distance 1 of any the $x_i$ must be connected. We use the argument from Lemma \ref{lem:main} and argue that
$$  \int_{\partial \Omega_r \cap \left\{z: \|z-y\| = \varepsilon \right\}} \nabla E \cdot dn \geq -   \int_{A_r} \nabla E \cdot dn.$$
Arguing as in the proof of Lemma  \ref{lem:main}, we have 
$$ \left|   \int_{\partial \Omega_r \cap \left\{z: \|z-y\| = \varepsilon \right\}} \nabla E \cdot dn \right| \leq c_{\alpha} \cdot \varepsilon \cdot n^{\frac{1}{2} - \frac{\alpha}{2}}.$$
As well as
$$ -   \int_{A_r} \nabla E \cdot dn \geq c_{\alpha} n \frac{|A_r|}{r^{\alpha+1}}.$$
Thus, as in the proof of Lemma \ref{lem:main}, we have
$$ \frac{|A_r|}{2 \pi r} \leq c_{\alpha} \varepsilon r^{\alpha} n^{-\frac{\alpha}{2} - \frac{1}{2}}.$$
Applying once more Lemma \ref{lem:ang}
$$  p = \lim_{r \rightarrow \infty} \frac{|A_r|}{2 \pi r} \leq  10 \cdot \frac{\left| A_{ \frac{100n}{p}} \right|}{ \frac{100n}{p}} \leq c_{\alpha} \varepsilon \left( \frac{n}{p} \right)^{\alpha} n^{-\frac{\alpha}{2} - \frac{1}{2}}$$
and thus
$$  p \leq c_{\alpha} \cdot \varepsilon^{\frac{1}{1 + \alpha}}  \cdot n^{\frac{\alpha-1}{2\alpha+2}}.$$
\end{proof}

We may think of Lemma \ref{lem:dist} as a measure of the maximum amount of distortion that the gradient flows can experience. In particular, letting $\varepsilon \rightarrow 0$ gives us quantitative control on the statement that critical points are hit with probability 0. The case $\alpha = 0$ is also instructive because only very little distortion can take place.

\section{Proof of Theorem 3}
The purpose of this section is to deduce the growth estimate from a Beurling estimate. We first present a simple argument showing that we can consider the problem in a suitable discrete setting on graphs. After that, we present an existing argument by Benjamini-Yadin \cite{ben} (suitably simplified and adapted to our setting). 

\subsection{Going discrete.} We first note a basic fact. GFA leads, with probability 1, to particles whose disks touch and which form a tree structure. This from the fact that a new incoming particle will only attach itself to one existing particle with likelihood 1. There are points where a new circle would touch two circles but this is a finite set of points and gradient descent shows that they lead to a set of measure 0. Moreover, since every disk in the Euclidean plane can only touch at most 6 other circles, we can furthermore deduce that the every vertex in the degree has at most 6 neighbors. We deduce the following fact.

\begin{proposition}
The adjacency structure of GFA can be embedded into an infinite 6-regular tree (taken to be rooted in the first particle $x_1$) with likelihood 1.
\end{proposition}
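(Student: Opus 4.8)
The plan is to verify the three structural facts the proposition implicitly asserts—that the cluster is a tree, that every vertex has degree at most $6$, and that the resulting finite tree embeds into the infinite $6$-regular tree—and to argue that each holds with probability $1$. First I would establish that every incoming particle attaches to exactly one existing particle almost surely. The locus of starting angles at infinity whose gradient flow would first become tangent to \emph{two} existing disks simultaneously is a finite set: by Lemma~\ref{lem:topo} the preimage on $\{\|x\|=r\}$ of any fixed particle is a union of finitely many intervals, so the boundary angles between basins of distinct particles form a finite set, and these are precisely the angles producing a simultaneous double tangency. Since the incoming angle is drawn from the uniform (hence non-atomic) measure on $[0,2\pi]$, such a double-attachment event has probability $0$, and by a union bound over the (finitely many) existing particles the probability remains $0$ at each step. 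Thus, almost surely, each new disk touches exactly one earlier disk.

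Next I would deduce the tree structure. Each particle $x_{n+1}$ is joined by an edge to the unique earlier particle it touches; since it attaches to exactly one predecessor, the graph built step by step is connected and acquires exactly one new edge with each new vertex, so it contains no cycle and is a tree. For the degree bound I would invoke the elementary packing fact that in $\mathbb{R}^2$ a closed disk of radius $1/2$ can be tangent to at most $6$ pairwise non-overlapping disks of the same radius (the kissing number in the plane is $6$); since all particles are disks of radius $1/2$ with disjoint interiors, every particle is tangent to at most $6$ others, so every vertex of the tree has degree at most $6$.

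Finally I would assemble the embedding. Rooting the tree at $x_1$, every other vertex has exactly one neighbor closer to the root (its parent, the particle it attached to) and hence at most $5$ children; the root itself has at most $6$ children. A rooted tree with these degree constraints embeds into the infinite $6$-regular tree $T_6$ rooted at a fixed vertex by a straightforward greedy/inductive map: send $x_1$ to the root of $T_6$, and having placed a vertex $v$, inject its (at most $5$, or at most $6$ for the root) children into the available child-slots of the image of $v$ in $T_6$. Taking the union over all finite stages gives the embedding of the full GFA adjacency structure. The only genuinely delicate point is the first step—showing the double-attachment set has measure zero—and here it is exactly property $(P)$ and Lemma~\ref{lem:topo} that do the work, guaranteeing finiteness of the exceptional angles; the degree bound and the embedding are then purely combinatorial and deterministic.
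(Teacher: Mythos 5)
Your overall architecture (probability-one single attachment, kissing number $6$, greedy embedding of a rooted tree with at most $5$ children per non-root vertex into the infinite $6$-regular tree) matches the paper's, and the combinatorial and deterministic parts are correct. The genuine gap is in the one probabilistic step: the claim that the basin-boundary angles ``are precisely the angles producing a simultaneous double tangency.'' Lemma~\ref{lem:topo} tells you that each basin $A_r^{(i)}$ is, up to finitely many points, a finite union of open intervals; it does \emph{not} tell you that two basins cannot overlap on a whole interval. A priori the following scenario is fully consistent with Lemma~\ref{lem:topo}: an entire interval $I$ of starting angles produces flows that all terminate at one of the (at most two) points $z^*$ lying at distance exactly $1$ from both $x_i$ and $x_j$; then every angle of $I$ lies in the interior of $A_r^{(i)} \cap A_r^{(j)}$, none of these angles is a boundary angle, and the double-attachment event has positive probability. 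So the inclusion you actually need -- that double-tangency angles are contained in the finite set of basin-boundary angles -- is exactly the nontrivial point, and Lemma~\ref{lem:topo} alone does not deliver it.

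What closes the gap (and is what the paper does) is the observation that the set of possible double-touch \emph{positions} in the plane is finite (at most two per pair of existing particles), together with the fact that no single point can absorb a positive measure of incoming flows. The paper phrases the latter as ``gradient descent shows that they lead to a set of measure 0'': through any point $z^*$ that is not a critical point of $E$ there passes exactly one gradient trajectory (uniqueness of solutions of $\dot x = \nabla E(x)$ away from the particles), so at most one starting angle can reach $z^*$; the finitely many critical points are handled by property (P) and the stable/unstable manifold argument of \S 2.2. Alternatively, you could invoke Lemma~\ref{lem:dist} with $y = z^*$ and let $\varepsilon \to 0$, which gives quantitative decay of the probability of hitting $B_{\varepsilon}(z^*)$. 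Either addition makes your proof complete; without it, the key identification is an assertion rather than a consequence of Lemma~\ref{lem:topo}.
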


One way of studying the diameter of GFA is to study the diameter of the evolving induced subgraph on the infinite 6-regular tree. Note that the likelihoods of attaching a new particle/vertex to an existing vertex in the tree is still induced by the underlying continuous setting in $\mathbb{R}^2$ (one may think of GFA as running in the background and the evolution on tree as being a cartoon picture). The argument then simply uses that the diameter of the induced subgraph on the infinite tree immediately bounds the diameter of GFA by triangle inequality.

\subsection{A Concentration Inequality}
This section is almost verbatim from Benjamini-Yadin \cite{ben} and included for the convenience of the reader.
\begin{lemma} \label{lem:prob} Let $B = \sum_{k=1}^{n} Z_k$ be a sum of independent Bernoulli random variables taking values in $\left\{0,1\right\}$. Then, for any $C > 1$, we have
$$ \mathbb{P} \left( B \geq C \cdot \mathbb{E} B\right) \leq \exp\left[ - C \left( \mathbb{E}B\right) \log\left(\frac{C}{e}\right)\right].$$ 
\end{lemma}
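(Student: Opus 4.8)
The plan is to use the exponential moment method (the Chernoff bound). Write $\mu = \mathbb{E} B = \sum_{k=1}^n p_k$, where $p_k = \mathbb{P}(Z_k = 1)$. The first step is to fix a free parameter $t > 0$ and apply Markov's inequality to the nonnegative random variable $e^{tB}$:
$$ \mathbb{P}(B \geq C\mu) = \mathbb{P}(e^{tB} \geq e^{tC\mu}) \leq e^{-tC\mu}\, \mathbb{E}\left[e^{tB}\right]. $$

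The second step is to control the moment generating function. Because the $Z_k$ are independent, the expectation factorizes, so that $\mathbb{E}[e^{tB}] = \prod_{k=1}^n \mathbb{E}[e^{tZ_k}] = \prod_{k=1}^n \left(1 + p_k(e^t - 1)\right)$; applying the elementary inequality $1 + x \leq e^x$ to each factor yields $\mathbb{E}[e^{tB}] \leq \exp\left((e^t - 1)\mu\right)$. Combining this with the Markov step gives
$$ \mathbb{P}(B \geq C\mu) \leq \exp\left[\mu\big((e^t - 1) - tC\big)\right]. $$

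The third step is to optimize in $t$. Differentiating the exponent in $t$ and setting the result to zero gives $e^t = C$, i.e. $t = \log C$, which is admissible precisely because the hypothesis $C > 1$ forces $t > 0$. Substituting $t = \log C$ back in produces exponent $\mu(C - 1 - C\log C)$, so that
$$ \mathbb{P}(B \geq C\mu) \leq \exp\left[\mu(C - 1 - C\log C)\right]. $$
Finally I would discard the harmless $-1$ term: since $C - 1 - C\log C \leq C - C\log C = -C(\log C - 1) = -C\log(C/e)$, the bound collapses to exactly $\exp[-C\mu\log(C/e)]$, which is the claimed estimate.

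I do not expect a genuine obstacle here, as this is the textbook Chernoff argument; the only two points meriting care are that independence of the $Z_k$ is exactly what licenses the factorization of the moment generating function, and that the optimizing value $t = \log C$ lies in the admissible range $t > 0$ only because of the assumption $C > 1$. The slight slack introduced by dropping the $-1$ is what turns the sharp optimized exponent into the clean stated form.
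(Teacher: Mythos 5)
Your proof is correct and follows essentially the same route as the paper: both use the exponential moment (Bernstein/Chernoff) method, factor the moment generating function via independence, apply $1+x \leq e^x$, optimize at $e^t = C$ (valid since $C>1$), and then drop the $-1$ term to reach the stated form. The only cosmetic difference is notation ($t$ versus $\alpha$); note, as the paper does, that discarding the $-1$ relies on $\mathbb{E}B \geq 0$ so the inequality in the exponent survives exponentiation.
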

\begin{proof}
This follows from the classical Bernstein method. For any $\alpha > 0$, we have
$$ \mathbb{E} e^{\alpha B} = \prod_{k=1}^{n} \mathbb{E} e^{\alpha Z_k} = \prod_{k=1}^{n} ((e^{\alpha} -1)\cdot  \mathbb{E} Z_k +1).$$
Now, using $\log(1+x) \leq x$, we have
\begin{align*}
 \prod_{k=1}^{n} ((e^{\alpha} -1)\cdot  \mathbb{E} Z_k +1) &= \exp\left( \sum_{k=1}^{n} \log\left( 1 + (e^{\alpha} -1)\cdot  \mathbb{E} Z_k \right) \right) \\
 &\leq  \exp\left( \sum_{k=1}^{n} (e^{\alpha} -1)\cdot  \mathbb{E} Z_k  \right) = e^{(e^{\alpha} -1) \cdot \mathbb{E} B}.
\end{align*}

Markov's inequality leads to
\begin{align*}
\mathbb{P}\left( B \geq C \cdot \mathbb{E} B \right) &= \mathbb{P}\left( e^{\alpha B} \geq e^{\alpha C \mathbb{E} B} \right) =  \mathbb{P}\left( e^{\alpha B - \alpha C \mathbb{E}B} \geq 1 \right) \\
&\leq \mathbb{E}~ e^{\alpha B - \alpha C \mathbb{E}B} = e^{- \alpha C \mathbb{E}B} \cdot \mathbb{E}~ e^{\alpha B} \leq e^{- \alpha C \mathbb{E}B}  e^{(e^{\alpha} -1) \cdot \mathbb{E} B}.
\end{align*}
It remains to optimize the function $-\alpha C + e^{\alpha} - 1$ over $\alpha$. Differentiating suggests choosing $\alpha$ so that $e^{\alpha} = C$ (which is possible since $C>1$). Plugging in, we get
$$ \mathbb{P}\left( B \geq C \cdot \mathbb{E} B \right)  \leq \exp \left[ (\mathbb{E} B) \left( - (\log{C}) C + C -1 \right) \right].$$
Since the random variables are nonnegative, we have $\mathbb{E}B \geq 0$ and can thus use 
$$- (\log{C}) C + C -1 \leq - (\log{C}) C + C = - C \log\left( \frac{C}{e} \right).$$
\end{proof}

\subsection{Growth on the tree} The argument in this subsection is also fully contained in Benjamini-Yadin \cite{ben}. We merely specialized the general argument to our setting.
We will now suppose that we have $n$ GFA particles $\left\{x_1, \dots, x_n \right\} \subset \mathbb{R}^2$. We now consider the process of adding the next $n$ particles $x_{n+1}, \dots, x_{2n}$. We first consider the subgraph $G_n \subset H_6$ of the infinite 6-regular tree and then the second induced subgraph $G_{2n} \subset H_6$. The statement we are going to prove is as follows.

\begin{lemma}
Assuming GFA satisfies a Beurling estimate
$$ \max_{1 \leq i \leq n} \mathbb{P}\left(\emph{new particles hits}~x_i\right) \leq c_{\alpha} \cdot n^{-\alpha},$$
there exists a constant $c>0$ (depending only on $c_{\alpha}$) such that
$$ \forall~v \in G_{2n} \qquad d(v, G_n) \leq c \cdot n^{1-\alpha} \qquad \mbox{with likelihood at least} \quad 1 - n \cdot e^{-c n^{1-\alpha}}.$$
\end{lemma}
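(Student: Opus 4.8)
The plan is to run Kesten's method in the Benjamini--Yadin form: reduce the geometric statement to the event that a single outward path in $H_6$ becomes fully occupied, control that event with the concentration inequality of Lemma \ref{lem:prob}, and then union bound over all such paths.

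\textbf{Reduction to a single path.} Using the embedding into the $6$-regular tree $H_6$, if some $v \in G_{2n}$ satisfies $d(v, G_n) \geq \ell$ then there is a self-avoiding path $\gamma = (w_0, w_1, \dots, w_\ell)$ with $w_0 \in G_n$ and $w_1, \dots, w_\ell$ all occupied by the time the $n$ new particles $x_{n+1}, \dots, x_{2n}$ have been inserted. It therefore suffices to bound, for each fixed $\gamma$, the probability that its endpoint $w_\ell$ lies in $G_{2n}$, and then sum over $\gamma$. For fixed $\gamma$ I will track the frontier: for each insertion step $k$, let $Z_k^\gamma \in \{0,1\}$ indicate that the new particle attaches at the first currently-unoccupied vertex of $\gamma$, thereby advancing the frontier by one along $\gamma$. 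Since filling all of $w_1, \dots, w_\ell$ requires exactly $\ell$ advancing steps, the event $\{w_\ell \in G_{2n}\}$ is contained in $\{B^\gamma \geq \ell\}$, where $B^\gamma = \sum_k Z_k^\gamma$.

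\textbf{Applying the Beurling estimate and Lemma \ref{lem:prob}.} The purpose of the frontier bookkeeping is that advancing the frontier forces the incoming particle to attach to one specific already-present particle (the occupied endpoint of the frontier edge). Hence, conditioned on the entire history $\mathcal{F}_{k-1}$ before step $k$, the Beurling hypothesis gives $\mathbb{P}(Z_k^\gamma = 1 \mid \mathcal{F}_{k-1}) \leq c_\alpha m^{-\alpha} \leq c_\alpha n^{-\alpha} =: p$, since the number $m$ of particles present at each of these steps is at least $n$. Because this bound is uniform over the past and $e^{t}-1 > 0$, the conditional moment generating function factorizes exactly as in the proof of Lemma \ref{lem:prob}: iterating $\mathbb{E}[e^{t Z_k^\gamma}\mid \mathcal{F}_{k-1}] \leq (e^{t}-1)p + 1$ yields $\mathbb{E}\, e^{t B^\gamma} \leq e^{(e^{t}-1)np}$. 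Thus $B^\gamma$ obeys the same Bernstein bound as in Lemma \ref{lem:prob} with $\mathbb{E}B$ replaced by the upper bound $\mu := np = c_\alpha n^{1-\alpha}$, and with threshold $\ell = C\mu$,
$$\mathbb{P}(w_\ell \in G_{2n}) \leq \mathbb{P}(B^\gamma \geq C\mu) \leq \exp\!\left[-C\mu \log\!\left(\tfrac{C}{e}\right)\right].$$

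\textbf{Union bound and choice of constant.} In $H_6$ the number of self-avoiding paths with $\ell$ new vertices emanating from $G_n$ is at most $6n \cdot 5^{\ell-1} \leq n\,5^{\ell}$, since there are at most $6n$ starting edges and at most $5$ continuations at each subsequent vertex. Summing the previous display over these paths with $\ell = C\mu = Cc_\alpha n^{1-\alpha}$,
$$\mathbb{P}\big(\exists\, v \in G_{2n}: d(v,G_n) \geq \ell\big) \leq n\,5^{\ell}\exp\!\left[-C\mu \log\!\left(\tfrac{C}{e}\right)\right] = n\exp\!\left[C\mu\big(\log 5 - \log\tfrac{C}{e}\big)\right].$$
Taking $C$ a large absolute constant (e.g. $C \geq 5e^{2}$, so that $\log(C/e)-\log 5 \geq 1$) turns the exponent into $-C\mu \leq -c\,n^{1-\alpha}$, while simultaneously $\ell = Cc_\alpha n^{1-\alpha} = c\,n^{1-\alpha}$; this is exactly the claimed bound $d(v,G_n)\leq c\,n^{1-\alpha}$ with failure probability at most $n\,e^{-cn^{1-\alpha}}$.

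\textbf{Main obstacle.} The one genuinely delicate point is the dependence structure: the variables $Z_k^\gamma$ are \emph{not} independent, so Lemma \ref{lem:prob} cannot be quoted as a black box. The remedy is that the Beurling estimate controls the conditional success probabilities uniformly, and the Bernstein computation only ever uses the (conditional) factor $(e^{t}-1)p+1$; this is what licenses replacing $\mathbb{E}B$ by $np$ throughout. The second thing to watch is that the exponential decay of the concentration bound must beat the $5^{\ell}$ growth of the number of paths, which is precisely why $C$, and hence the constant $c$ in the diameter bound, must be taken large.
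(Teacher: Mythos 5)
Your proof follows essentially the same route as the paper's: reduce to a fixed self-avoiding path emanating from $G_n$ in $H_6$, bound the frontier-advancing indicators by the Beurling estimate, apply the Bernstein/Chernoff bound of Lemma \ref{lem:prob} with mean $\sim c_\alpha n^{1-\alpha}$, and finish with a union bound over the at most $n\cdot 6^{\ell}$ (your sharper $n\cdot 5^{\ell}$) paths together with a large choice of the constant $c$. If anything, your write-up is more careful on one point the paper glosses over: the paper invokes Lemma \ref{lem:prob}, stated for \emph{independent} Bernoulli variables, directly on the $Z_i$, whereas you correctly note these indicators are dependent and recover the same exponential bound via the uniform conditional estimate $\mathbb{P}(Z_k = 1 \mid \mathcal{F}_{k-1}) \leq c_\alpha n^{-\alpha}$ and iterated conditioning of the moment generating function.
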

\begin{proof}

The argument is based on analyzing an individual path and then taking a union bound over all paths. Let $v \in G_n$ be a vertex that is adjacent
to a vertex in $H_{6} \setminus G_n$. Consider a fixed path of length $c \cdot n^{1-\alpha}$ emanating from $v$ and never touching either itself or $G_n$. We ask ourselves: what is the likelihood that this path is being added when we add in the next $n$ points? Using $Z_i$ to denote the indicator variable of the event of next vertex in a (fixed) path being added when adding particle $v_{n+i}$, we have that
$$ Z_i \in \left\{0,1\right\} ~ \mbox{is Bernoulli with} \quad \mathbb{E} Z_i \leq c_{\alpha} n^{-\alpha} \quad \mbox{and} \quad \mathbb{E} \sum_{i=1}^{n}Z_i \leq c_{\alpha} n^{1-\alpha}.$$
The likelihood of the entire path of length $c n^{1-\alpha}$ being added is thus bounded from above by the likelihood of having $\mathbb{E} \sum_{i=1}^{n}Z_i$ exceed $ c n^{1-\alpha}$
$$ \mathbb{P}\left(\sum_{i=1}^{n} Z_i \geq c ~ n^{1-\alpha} \right) \leq \mathbb{P}\left(B \geq \frac{c ~  n^{1-\alpha}}{\mathbb{E} B} \mathbb{E} B \right)$$
Appealing to Lemma \ref{lem:prob}, we have
\begin{align*}
  \mathbb{P}\left(B \geq \frac{c ~  n^{1-\alpha}}{\mathbb{E} B} \mathbb{E} B \right) &\leq \exp\left(  - c ~ n^{1-\alpha} \log\left( \frac{c ~  n^{1-\alpha}}{\mathbb{E} B}  \right) \right) \\
  &\leq  \exp\left(  - c \cdot n^{1-\alpha} \log\left( \frac{c }{c_{\alpha}}  \right) \right).
  \end{align*}
This is the likelihood of any particular path venturing far. It remains to count the number of such paths: since we have $n$ original vertices to start from and are on a $6-$regular tree, the number of such paths is trivially bounded from above by $n 6^{c n^{1-\alpha}}$ and thus, by the union bound,
$$ \mathbb{P}\left(\mbox{long path exists}\right) \leq n \cdot e^{ c ~ n^{1-\alpha}  \left(\log{(6)}   - \log\left( \frac{c }{c_{\alpha}}\right)  \right)}.$$
Choosing $c = 1000 \cdot c_{\alpha}$, then implies the result.
\end{proof}

The growth bound now follows from the Borel-Cantelli lemma. We refer to Benjamini-Yadin \cite{ben} for details and implementations of a similar idea in more general settings.

\section{Proof of Theorem 4}
\begin{proof}
The argument is a subset of the existing arguments. We briefly summarize them in the correct order. Suppose $\left\{x_1, \dots, x_n\right\} \subset \mathbb{R}^d$ is given, $d \geq 3$ and we consider the harmonic energy
$$ E(x) = \sum_{i=1}^{n} \frac{1}{\|x- x_i\|^{d-2}}.$$
Let us pick a large radius $R \gg n \gg 1$, let $A_R$ be all points on $\left\{x: \|x\|=R\right\}$ for which the gradient flow attaches itself to particle $x_i$ and denote the union of all these flows by $\Omega$. Then
$$ \int_{\partial \Omega} \nabla E \cdot dn  = \int_{\Omega} \Delta E~ dx = 0.$$
The boundary integral can be decomposed into the two parts: the parts close to $x_i$ and the parts on the sphere $\left\{x: \|x\|=R\right\}$
$$ \partial \Omega = \partial \Omega_{x_i} \cup \partial \Omega_{R}.$$
We note that, as before, when $R \rightarrow \infty$,
\begin{align*}
\int_{\partial \Omega_R} \nabla E \cdot dn  &= (d-2+o(1)) \int_{\Omega_R} \frac{x_i - x}{\|x_i - x\|^d} dn \\
&= (1+o(1)) \frac{n}{R^{d-1}}  \int_{\Omega_R} d\sigma = (1+o(1)) \cdot n \cdot \frac{|\Omega_R|}{R^{d-1}}.
\end{align*}
Meanwhile, on the other side we argue with the triangle inequality, $1-$separation of points and a constant $c_d$ that may change its value every time it appears that
\begin{align*}
 \left| \int_{\partial \Omega_{x_i}} \nabla E \cdot dn \right| &\leq   \int_{\partial \Omega_{x_i}}  \left| \nabla E \right| \cdot d\sigma  \\
 &\leq c_d   \int_{\partial \Omega_{x_i}} \sum_{i=1}^{n} \frac{1}{\|x-x_i\|^{d-1}}  d\sigma  \leq c_d \sum_{\ell=1}^{n^{1/d}} \ell^{d-1} \frac{1}{\ell^{d-1}} \leq c_d n^{1/d}.
 \end{align*}
 This proves the Beurling estimate. As for the growth estimate, we can simply modify the existing argument as follows: a GFA tree in $\mathbb{R}^d$ can, with likelihood 1, be embedded in an infinite regular tree $H_k$ where $k$ is the kissing number of the space: that is, the maximal number a sphere of radius 1 can be simultaneously touched by other disjoint spheres with the same radius. The remainder of the argument is unchanged (though all the implicit constants do of course change).
\end{proof}

\section{Proof of the Proposition}
\begin{proof}
We first note, when picking a point $y$ at distance $R \gg n$ from the existing set of points, the set $\|y-x\| = \mbox{const}$ is a circle with curvature $1/\mbox{const}$. These circles become flat lines as $\|y\| \rightarrow \infty$ and therefore only points in the convex hull are considered at each step.
We can now consider three consecutive points on the convex hull and assume there is an (interior) angle $\alpha$ in the middle point. We can
then draw the half-lines separating closest distances between any of the points. These half-lines do not meet but, asymptotically, the only relevant questions is the opening angle $\beta$ that they create. Elementary trigonometry shows $\beta = \pi - \alpha$. The sum of the opening angles in a convex polygon with $m$ sides is
$$ \sum_{i=1}^{m} \alpha_i = (m-2) \pi.$$
Therefore, the likelihoods in our case sum up to
$$ \sum_{i=1}^{m} (\pi - \alpha_i) = m \pi - (m-2) \pi = 2 \pi.$$
Thus the likelihood of attaching itself to a new point is given by $(\pi - \alpha)/(2\pi)$.
\end{proof}

\textbf{Acknowledgment.} The author was partially supported by the NSF (DMS-2123224).

\end{document}